\newtheorem{theo}{Theorem}[section]
\newtheorem{thm}{Theorem}
\newtheorem{definition}{Definition}[section]
\newtheorem{prop}[theo]{Proposition}
\newtheorem{lemma}[theo]{Lemma}
\newtheorem{coro}[theo]{Corollary}
\newtheorem{remark}[theo]{Remark}
\begin{document}
\date{}

\title{
On a random model of forgetting \\
}

\author{Noga Alon
\thanks
{Department of Mathematics, Princeton University, Princeton,
NJ 08544, USA and Schools of Mathematics and Computer Science,
Tel Aviv University, Tel Aviv, Israel.
Email: {\tt nalon@math.princeton.edu}.
Research supported in part by
NSF grant DMS-1855464, ISF grant 281/17,
BSF grant 2018267
and the Simons Foundation.}
\and Dor Elboim
\thanks{Department of Mathematics, Princeton University,
Princeton, NJ 08544, USA. Email:
	{\tt delboim@princeton.edu}.}
\and  Allan Sly
\thanks{Department of Mathematics, Princeton University,
Princeton, NJ 08544, USA. Email:
	{\tt allansly@princeton.edu}. Research supported in part by  NSF grant DMS-1855527, a Simons Investigator grant and a MacArthur Fellowship.}
}

\maketitle
\begin{abstract}
Georgiou, Katkov and Tsodyks considered the following random process.
Let $x_1,x_2,\ldots $ be an infinite sequence of independent, identically distributed,
uniform random points in $[0,1]$.
Starting with $S=\{0\}$, the
elements $x_k$ join $S$ one by one, in order. When an entering
element is larger than the current minimum element of $S$, this minimum
leaves $S$. Let $S(1,n)$ denote the content of $S$ after the first $n$
elements $x_k$ join.
Simulations suggest that the size $|S(1,n)|$ of $S$
at time $n$ is typically close to $n/e$. Here we first give a rigorous proof
that this is indeed the case, and that in fact the symmetric
difference of $S(1,n)$ and the set 
$\{x_k\ge 1-1/e: 1 \leq k \leq n \}$ is of size at
most $\tilde{O}(\sqrt n)$ with high probability.  Our main result
is a more accurate description of the process implying, in
particular, that as $n$ tends to infinity
$ n^{-1/2}\big( |S(1,n)|-n/e \big) $ converges
to a normal random variable with variance $3e^{-2}-e^{-1}$. We further show that the dynamics of the symmetric difference of $S(1,n)$ and the set 
$\{x_k\ge 1-1/e: 1 \leq k \leq n \}$ converges with proper scaling to a three dimensional Bessel process.
\end{abstract}

\section{Introduction}

The following random process, which we denote by $P$, is considered by 
Georgiou, Katkov and Tsodyks in \cite{GKT}
motivated by the study of a possible simplified model for
the process of forgetting.
Let $x_1,x_2, \ldots $ be independent, 
identically distributed, uniform
random points in $[0,1]$. 
Starting with the set $S(1,0)=\{0\}=\{x_0\}$ the elements
$x_k$ enter the set one by one, in order, and if when 
$x_k$ enters $S(1,k-1)$ it is larger than 
the minimum member $\min(S(1,k-1))$ of $S(1,k-1)$, this minimum leaves it. Let us note that in this process, the fact that $x_i$ are uniformly distributed is not crucial. Indeed, it only matters that the ordering of the arriving elements is a uniform random ordering and therefore the uniform distribution can be replaced with any other non-atomic distribution.

The set $S(1,n)$ will be called the memory. We let $P(n)$ be the finite process that stops at time $n$. Simulations indicate that the expected size of the memory 
$S(1,n)$ at time $n$ is $(1+o(1))n/e$ where $e$ is the basis of the natural logarithm,
and that in fact with high probability (that is, with probability 
approaching $1$ as $n$ tends to infinity) the size at the end is very close
to the expectation. Tsodyks \cite{Ts} suggested the problem of proving 
this behavior rigorously. Our first result here is a proof that
this is indeed the case, as stated in the following theorem. Recall that for two function $f,g$ the notation $f(n)=\tilde O (g(n))$ means that for all large $n$ one has $f(n)\le C(\log n)^C g(n)$.

\begin{thm}
\label{t11}
In the process $P(n)$ the size of the memory 
$S(1,n)$ at time $n$ is, with high probability,
$(1+o(1))n/e$. Moreover, with high 
probability, the  symmetric difference between the final content
of the memory and the set $\{x_k \ge 1-1/e: k \leq n\}$
is of size at most $\tilde{O}(\sqrt {n})$. 
\end{thm}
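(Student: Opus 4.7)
Set $v:=1-1/e$ and let $A_n:=\{x_k:x_k\ge v,\,1\le k\le n\}$. Since $|A_n|$ is binomial with parameters $(n,1/e)$, the Chernoff bound gives $|A_n|=n/e+O(\sqrt{n\log n})$ with high probability, so the first assertion of the theorem follows from the symmetric-difference bound. I therefore focus on proving $|S(n)\triangle A_n|=\tilde O(\sqrt n)$ w.h.p., and decompose
$$|S(n)\triangle A_n|=N_n+R_n,$$
where $N_n:=|\{k\le n:x_k<v,\,x_k\in S(n)\}|$ counts the \emph{small} elements still in the memory and $R_n:=|\{k\le n:x_k\ge v,\,x_k\notin S(n)\}|$ counts the \emph{large} elements already removed.

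\smallskip

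Let $N_k$ be the number of small elements in $S_k$. The plan hinges on one key observation: whenever $N_k>0$, the minimum of $S_k$ is a small element (since all smalls lie in $[0,v)$ and all larges in $[v,1]$), so any element removed at step $k+1$ must be small. Consequently every large removal occurs at a step with $N_k=0$, giving $R_n\le \#\{k<n:N_k=0\}$. A short case analysis on $x_{k+1}$ yields the transitions of $N_k$: writing $M_s(k)$ for the smallest small of $S_k$ when $N_k>0$,
\begin{align*}
N_{k+1}-N_k=+1\ &\text{w.p.}\ M_s(k),\\
N_{k+1}-N_k=0\ &\text{w.p.}\ v-M_s(k),\\
N_{k+1}-N_k=-1\ &\text{w.p.}\ 1-v,
\end{align*}
and when $N_k=0$, $N_{k+1}-N_k=+1$ with probability $v$ and $0$ otherwise. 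The conditional drift when $N_k>0$ is $M_s(k)-(1-v)=M_s(k)-1/e$, and the value $v=1-1/e$ is precisely the one making this drift vanish when the smallest small equilibrates at $M_s\approx 1/e$. One therefore expects $(N_k)$ to behave as a \emph{critical} random walk with $\tilde O(\sqrt k)$ fluctuations and local time $\tilde O(\sqrt k)$ at $0$.

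\smallskip

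The strategy is then to establish (a) $N_n=\tilde O(\sqrt n)$ w.h.p., which directly controls $|S(n)\setminus A_n|$, and (b) $\#\{k\le n:N_k=0\}=\tilde O(\sqrt n)$ w.h.p., which by the key observation controls $R_n$. Both should follow from a self-regulation: if $N_k$ is large then many smalls accumulate in $[0,v]$, forcing $M_s(k)$ toward $0$ and the drift of $N_k$ to be negative; if $N_k$ is small the drift at $N=0$ is positive and pushes it back up. Concretely, I would try to exhibit a Lyapunov function $\Phi$ on the configuration with $\Phi\asymp N_k^2$ and a one-step conditional increment bounded uniformly in $k$, yielding $\mathbb E[N_n^2]=O(n)$ and then $N_n=\tilde O(\sqrt n)$ w.h.p.\ via standard martingale tail estimates; an analogous local-time argument would handle the number of visits of $N_k$ to $0$. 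Combining (a) and (b) with the binomial concentration of $|A_n|$ yields both conclusions of the theorem.

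\smallskip

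\textbf{Main obstacle.} The genuine difficulty is that $(N_k)$ is not a Markov chain on its own: its drift involves $M_s(k)$, a functional of the full configuration of smalls in $[0,v]$. Proving that the smalls remain sufficiently spread out that $M_s(k)$ averages to roughly $1/e$ (and hence the net drift of $N_k$ really is near zero) is the technical heart of the argument, and must be carried out either via a carefully chosen Lyapunov function on the joint state of smalls or by comparison with a tractable auxiliary process; this is where most of the work will go.
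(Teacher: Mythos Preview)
Your plan diverges from the paper's proof of Theorem~\ref{t11} in Section~2, which does not track $N_k$ at all. Instead the paper proves a monotone coupling lemma: running the process from starting sets $T_1$ and $T_1\cup T_2$ on the same sequence $x_1,\dots,x_n$, the memories stay nested and their size gap never exceeds $|T_2|$. This gives (i) subadditivity of $\ell\mapsto E[s(x,\ell)]$ and (ii) that each $s(x,\ell)$ is $2$-Lipschitz in the inputs $x_i$, hence Azuma-concentrated around its mean. The proof then introduces an \emph{implicit} threshold $y$ via $E[s(y,n/2)]=5\sqrt{n\log n}$, uses concentration to show $m_i\le y$ for essentially all $i\in(n/2,n]$, and only afterward identifies $y\approx 1-1/e$ by a double count of small-by-large evictions (the relevant integral being $\int_0^y\frac{1-y}{1-x}\,dx=(1-y)\ln\frac{1}{1-y}$, which must match $1-y$). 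No control of $M_s(k)$ and no Lyapunov function are needed.

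Your route is really the paper's Section~3 strategy, used there for the sharper Theorems~2--4. Your decomposition and the observation $R_n\le\#\{k:N_k=0\}$ are correct, and you have pinpointed the obstacle exactly; what is missing is the Lyapunov function itself, and it is not of the form $\Phi\asymp N_k^2$. The paper's answer is the \emph{linear} weighted sum
\[
W(z_0,k):=\sum_{x\in S(z_0,k)}\frac{1}{1-x},
\]
for which, on $\{N_k>0\}$,
\[
E\bigl[W(z_0,k+1)-W(z_0,k)\,\big|\,\mathcal F_k\bigr]=\int_0^{z_0}\frac{dx}{1-x}-\frac{1-m_k}{1-m_k}=-\log(1-z_0)-1=0,
\]
so the $M_s(k)$-dependence cancels exactly and one obtains a genuine bounded-increment martingale. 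Since $N_k\le W(z_0,k)\le eN_k$, Azuma on $W$ gives $N_n=O(\sqrt{n\log n})$ directly; moreover the compensator accrued at times with $N_k=0$ equals (up to $O(1)$) minus the running minimum of this martingale, which bounds the local time at zero and hence $R_n$ by the same order. The Section~2 argument is more robust and needs no magic test function but is stuck at $\tilde O(\sqrt n)$; your approach, once supplied with the weight $1/(1-x)$, is sharper and is precisely what the paper uses to get the exact limit laws.
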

A result similar to Theorem~\ref{t11} (without any quantitative estimates) has been proved
independently by Friedgut and Kozma \cite{FK} using different ideas.

\begin{figure}[htp]
    \centering
    \includegraphics[width=14.5cm]{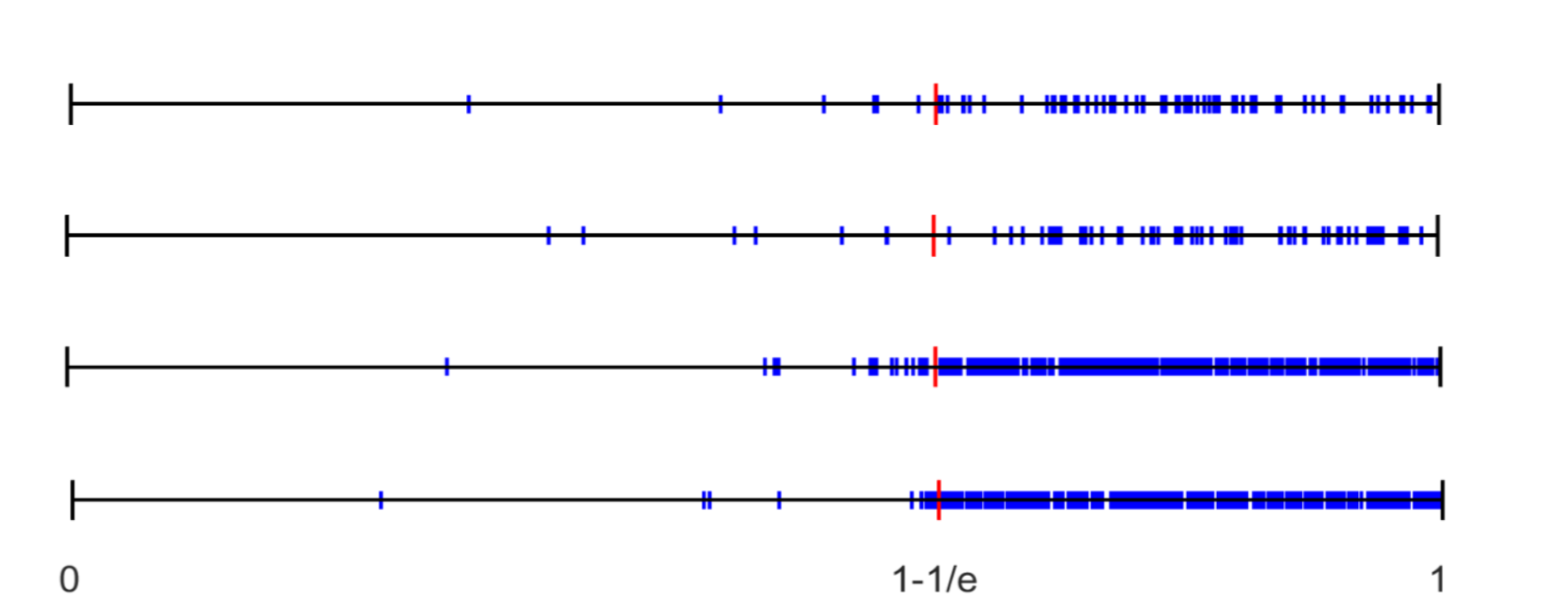}
    \caption{Four independent samples of the process obtained from a computer simulation. The blue points are the content of the memory set after $n=200$ steps in the first two pictures and after $n=1000$ steps in the last two. In the main theorems we analyze the evolution of the memory set and determine how close it is to the set of uniform variables that are larger than $1-1/e$.}
    \label{fig:1}
\end{figure}

Our main result provides a more accurate description of the process
$P$ as stated in the following theorems. To state the next theorem we let $S(1,n)$ be the content of the list at time $n$ and let $s(1,n):=|S(1,n)|$ be the size of the list.  

\begin{thm}\label{theo:size}
Let $B_t$ be a standard Brownian motion. We have that 
\begin{equation*}
    \Big\{ \frac{s(1,tn)-tn/e}{\sqrt{n}} \Big\} _{t>0} \overset{d}{\longrightarrow } \Big\{ \frac{\sqrt{3-e}}{e}\cdot  B_t \Big\}_{t>0} ,\quad n\to \infty .
\end{equation*}
\end{thm}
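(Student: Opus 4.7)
The plan is to establish the functional CLT for $s(1,tn)$ via a martingale decomposition and an analysis of the minimum process. Writing $m_k := \min S(1,k)$ and $\mathcal{F}_k := \sigma(x_1,\ldots,x_k)$, the identity $s(1,n) - s(1,n-1) = \mathbf{1}\{x_n < m_{n-1}\}$ together with $\mathbb{E}[\mathbf{1}\{x_n < m_{n-1}\} \mid \mathcal{F}_{n-1}] = m_{n-1}$ yields the Doob decomposition
\[
s(1,n) - 1 = M_n + \sum_{k=1}^n m_{k-1}, \qquad M_n := \sum_{k=1}^n \bigl(\mathbf{1}\{x_k < m_{k-1}\} - m_{k-1}\bigr),
\]
where $M_n$ is a bounded-increment $(\mathcal{F}_k)$-martingale with predictable quadratic variation $\langle M\rangle_n = \sum m_{k-1}(1-m_{k-1})$. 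Theorem~\ref{t11} (via the limiting growth rate $1/e$) tells us that $(m_k)$ has stationary mean $1/e$, so the fluctuation $\sum_{k\le n}(m_{k-1}-1/e)$ is centered and, by standard heuristics, of order $\sqrt n$, placing it on the same scale as $M_n$.

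The central technical task is to promote this decomposition to a joint FCLT, for which the minimum process $(m_k)$ must mix rapidly and converge to a stationary law $\pi$. I would establish this by coupling two realizations of $S(1,\cdot)$ started from different initial configurations: elements well above $1-1/e$ are effectively frozen (they are almost never the minimum during the relevant window), while the bottom of the memory is reshuffled by the fresh uniform arrivals. A coupling tailored to these dynamics should show that the two minima coincide after $O(\log n)$ steps with high probability, yielding exponential decay of $\mathrm{Cov}_\pi(m_0, m_j)$ in $j$. This is the main obstacle, since the chain lives on an infinite-dimensional state space and no generic mixing bound applies.

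With fast mixing in hand, I would apply a functional martingale CLT to $M_{tn}/\sqrt n$ and, jointly with it, a FCLT for weakly dependent stationary sequences to $\sum_{k\le tn}(m_{k-1}-1/e)/\sqrt n$, obtaining convergence to a two-dimensional Gaussian process with independent Brownian increments inherited from the Markov/martingale structure. Summing the two limit pieces then gives $(s(1,tn) - tn/e)/\sqrt n \to \sigma B_t$ for a single Brownian motion $B$. Tightness in Skorokhod space is immediate from the bounded increments of the martingale and the uniform bound $|m_k|\le 1$ together with a standard second-moment argument on the predictable part.

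The final step is the identification $\sigma^2 = (3-e)/e^2$. This requires a description of $\pi$ accurate enough to compute $\mathbb{E}_\pi[m(1-m)]$, the autocovariance kernel of $m$, and the cross-covariance between $M$ and the partial sums of $m_k$. Observe that $(3-e)/e^2 < (1/e)(1-1/e)$, so the process is strictly more concentrated than the raw Bernoulli count $|\{k\le n: x_k \ge 1-1/e\}|$; this reflects a negative feedback whereby an excess of arrivals above $1-1/e$ elevates the minimum and ejects other elements, while a deficit lets small arrivals persist and inflate the size. Carrying out the variance bookkeeping to arrive at exactly $(3-e)/e^2$ is the other substantial piece of work, and I expect it to proceed by an explicit computation using the description of $\pi$ that emerges from the coupling in Theorem~\ref{t11}.
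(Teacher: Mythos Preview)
Your decomposition $s(1,n)-1 = M_n + \sum_{k\le n} m_{k-1}$ is correct, but it pushes the real work into the predictable part $\sum_{k\le n}(m_{k-1}-1/e)$, and this is where the proposal has a genuine gap. The sequence $(m_k)$ is \emph{not} a Markov chain: when the current minimum is ejected, the next minimum is the second-smallest element of $S(1,k)$, which depends on the entire lower portion of the memory. Your claimed coupling (``two minima coincide after $O(\log n)$ steps'') would therefore require coupling the full bottom structure of two memories, and there is no evident mechanism forcing that quickly; the paper never establishes mixing in this sense, and I do not see a short route to it. Even granting exponential mixing, to pin down $\sigma^2$ you would need the full autocovariance series $\sum_j \mathrm{Cov}_\pi(m_0,m_j)$ together with the cross-covariances with $M$, none of which is readily computable from the description you sketch.

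The paper sidesteps both difficulties by a different decomposition. Rather than the raw count, it introduces the weighted sum
\[
W(z_0,n)=\sum_{x\in S(z_0,n)}\frac{1}{1-x},
\]
the weight $1/(1-x)$ being chosen precisely so that the compensated process $X(z_0,n)$ is an \emph{exact} martingale with no residual drift. One then applies the martingale functional CLT to the single martingale
\[
N_n := X(z_0,n) + e\,\bigl|\{k\le n : x_k\ge z_0\}\bigr| - n,
\]
whose predictable quadratic variation has the form $V_n=\sum_{k<n} f(m_k)$ for an explicit $f$. The crucial gain is that only a \emph{law of large numbers} $V_n/n\to 3-e$ is required here, and this follows from the estimate $\sum_k f(m_k)= n\int_0^{z_0} f(x)/(1-x)\,dx + O(n^{3/4+\varepsilon})$, obtained without any mixing input. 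A separate deterministic bound $|e\,s(1,n)-n-N_n|\le n^{1/3+\varepsilon}$ then transfers the FCLT from $N_n$ to $s(1,n)$. In short, the branching-martingale weight converts your two-term (martingale $+$ dependent drift) problem into a one-term martingale problem, and the exact constant $(3-e)/e^2$ drops out of a closed-form integral rather than an infinite covariance sum.
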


Our next theorem describes the final set of points obtained in this process. In particular, it strengthens the estimate in Theorem~\ref{t11} by removing the poly-logarithmic factor in the bound on the size of the symmetric difference between $S(1,n)$ and the set  $\{x_k \ge z_0: k\le n\}$ where $z_0:=1-1/e$. The theorem also provides the limiting distribution of this difference.
To state the theorem we define 
\begin{equation*}
       L_n:=\big|S(1,n) \setminus \big\{ x_k\ge z_0 :k\le n  \big\} \big| \quad \text{and} \quad R_n:=\big|\big\{  x_k\ge z_0 : k\le n  \big\} \setminus S(1,n) \big| . 
\end{equation*}

\begin{thm}\label{theo:12}
As $n$ tends to infinity we have
\begin{equation*}
\Big\{ \Big(  \frac{L_{tn}}{\sqrt{n}} \, , \, \frac{R_{tn}}{\sqrt{n}} \Big)  \Big\}_{t>0} \overset{d}{\longrightarrow } \big\{ \sqrt{2}e^{-1} \big( M_t-B_t, M_t \big) \big\} _{t>0},
\end{equation*}
where $B_t$ is a standard Brownian motion and $M_t:=\sup _{s\le t} B_s$ is the maximum process. In particular, the evolution of the size of the symmetric difference is given by
\begin{equation}\label{eq:936}
\Big\{ \frac{L_{tn}+R_{tn}}{\sqrt{n}}   \Big\}_{t>0} \overset{d}{\longrightarrow } \big\{ \sqrt{2}e^{-1} X_t \big\} _{t>0},
\end{equation}
where $X_t$ is a three dimensional Bessel process. It follows that the differences and the symmetric difference satisfy the following limit laws 
\begin{equation}\label{eq:LR}
    \frac{L_n}{\sqrt{n}} \overset{d}{\longrightarrow } \sqrt{2}e^{-1}|N|, \quad  \frac{R_n}{\sqrt{n}} \overset{d}{\longrightarrow } \sqrt{2}e^{-1}|N| \quad \text{and} \quad \frac{L_n+R_n}{\sqrt{n}} \overset{d}{\longrightarrow } X, 
\end{equation}
where $N$ is a standard normal random variable and $X$ has the density
\begin{equation}\label{eq:symmetric}
    f(x)=\frac{e^3x^2}{2\sqrt{\pi }} e^{-e^2x^2/4} \, \mathds 1\{x>0 \}.
\end{equation}
\end{thm}

In fact, in the following theorem, we give a complete description of the evolution of the process in a $n^{-1/2}$ neighbourhood around the critical point $z_0=1-1/e$. For $0<z<1$ and $n\ge 1$ we let $S(z,n):=S(1,n)\cap [0,z]$ be the set of elements in the list that are smaller than $z$ at time $n$ and let $s(z,n):=|S(z,n)|$.

\begin{thm}\label{theo:11}
As $n$ tends to infinity we have 
\begin{equation*}
  \bigg\{     \frac{s \big( z_0+ yn^{-1/2}, tn \big)}{\sqrt{n}}  \bigg\} _{ \substack{ t>0 \\ y\in \mathbb R  } }  \overset{d}{\longrightarrow } \Big\{ \sqrt{2}e^{-1} B_t+yt -\inf _{x\le t } \big( \sqrt{2}e^{-1} B_x+yx \big) \Big\} _{ \substack{ t>0 \\ y\in \mathbb R  } }.
\end{equation*}
\end{thm}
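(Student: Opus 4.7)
The plan is to realize $\{s(z_0 + yn^{-1/2}, tn)/\sqrt{n}\}$ as a reflected random walk and identify its diffusion limit via a martingale invariance principle combined with the Skorohod reflection map.

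I begin with the one-step dynamics. Writing $m := \min S_{k-1}$ and $z = z_0 + yn^{-1/2}$, the increment $\Delta s := s(z,k) - s(z,k-1)$ equals $\mathds{1}\{x_k \le z\} - \mathds{1}\{x_k > m,\,m \le z\}$. Conditioning on $m$: in the bulk regime $m \le z$, the conditional mean is $m + z - 1$ and the conditional second moment is $m + 1 - z$; in the boundary regime $m > z$, which coincides with $\{s(z, k-1) = 0\}$, the conditional mean is $z$, providing the reflecting push. The rate-equation identity $E[|S_{n+1}| - |S_n|] = E[\min S_n]$ combined with Theorem~\ref{t11} forces $E[\min S_n] \to 1/e$, and one shows that $\min S$ concentrates at $1/e$ with fluctuations of order $n^{-1/2}$ (the $\sqrt{n}$ extras are spread on an interval of macroscopic length $z_0 - 1/e$). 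Therefore, when $s(z, tn) > 0$, the drift per step takes the form $(m - 1/e) + yn^{-1/2}$, combining a deterministic piece $yn^{-1/2}$ with a fluctuating piece of the same order.

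The second step is the martingale decomposition $s(z, tn) = M_{tn} + A_{tn} + K_{tn}$, where $M$ is the compensated martingale, $A$ is the predictable drift, and $K$ is the non-decreasing reflection term supported on $\{s(z, \cdot) = 0\}$. The compensator sums to $A_{tn}/\sqrt{n} \to yt$ after the $(\min S - 1/e)$ fluctuations average out, while the quadratic variation of $M$ is governed by the effective diffusion coefficient of the combined slow process. The naive per-step variance is $m + 1 - z \approx 2/e$, but the effective variance in the limit is $2/e^2 = (\sqrt{2}\,e^{-1})^2$ — a reduction by a factor of $e$ arising from strong negative cross-covariances $\mathrm{Cov}(\Delta s_k, \Delta s_{k'})$ for $k \ne k'$, which in turn arise from the anti-correlation between $\Delta s$ and $\Delta(\min S)$ together with the fast relaxation of $\min S$ toward its equilibrium value $1/e$. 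This mirrors the variance reduction from the naive $(e-1)/e^2$ to the actual $(3-e)/e^2$ in Theorem~\ref{theo:size}. A functional martingale CLT then yields $(M_{tn} + A_{tn})/\sqrt{n} \to \sqrt{2}\,e^{-1} B_t + yt$ for a standard Brownian motion $B$.

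The third step is the Skorohod reflection. Since $s(z, tn) \ge 0$ and the reflection term $K$ only increases on $\{s(z, \cdot) = 0\}$, the scaled process $s(z_0 + yn^{-1/2}, tn)/\sqrt{n}$ converges to the Skorohod reflection of $\sqrt{2}\,e^{-1} B_t + yt$ at zero, which by the explicit form of the map equals $\sqrt{2}\,e^{-1} B_t + yt - \inf_{s \le t}(\sqrt{2}\,e^{-1} B_s + ys)$, the form claimed in the theorem. Joint convergence in $(t, y)$ follows because all thresholds are functionals of the same $(x_k)$-sequence and hence of the same limiting $B$; tightness, finite-dimensional convergence, and the continuity of the Skorohod map upgrade this to functional convergence on compact $(t, y)$-sets.

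The main obstacle is the exact variance identification yielding $2/e^2$. A careful quantitative analysis of the two-dimensional chain $(s(z, tn), \min S_{tn})$ is needed: one must show that $\min S$ mixes on a time scale small compared with $n$ and that its centered fluctuations $(\min S - 1/e)\sqrt{n}$ converge, jointly with $M/\sqrt{n}$, to a specific Gaussian process whose cross-covariance with $M/\sqrt{n}$ cancels the gap between the naive variance $2/e$ and the effective variance $2/e^2$. Establishing this two-scale averaging result uniformly in $y$ is the crux of the proof.
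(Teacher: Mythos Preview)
Your outline follows a natural strategy---decompose $s(z,\cdot)$ as a reflected random walk and invoke the Skorohod map---but it is not the paper's route, and the step you flag as ``the crux'' is precisely the one the paper sidesteps by a change of variables.

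The paper does not work with $s(z,n)$ directly. It introduces the weighted sum $W(z,n)=\sum_{x\in S(z,n)}1/(1-x)$, whose conditional increment on $\{W(z_0,n)\neq 0\}$ has mean \emph{exactly} zero regardless of $m_n$: the weight $1/(1-m_n)$ lost when the minimum leaves is matched in expectation by $\int_0^{z_0}dx/(1-x)=1$. Thus $X(z_0,n):=W(z_0,n)-Z(z_0,n)$ (with $Z$ the cumulative push at zero) is an honest martingale with no fluctuating drift. Its predictable quadratic variation is shown to be $2n+o(n)$ via a law of large numbers for $\sum_k f(m_k)$, giving $X(z_0,tn)/\sqrt n\Rightarrow\sqrt 2\,B_t$; a separate perturbation estimate in $z$ supplies the additive $eyt$. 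The passage back to $s$ is then elementary: every point of $S(z_0,n)$ lies within $n^{-1/4}$ of $z_0$ with high probability, so each weight $1/(1-x)=e+O(n^{-1/4})$ and $W\approx e\cdot s$. The factor $e^{-1}$ in $\sqrt 2\,e^{-1}$ thus enters \emph{deterministically}, not through any covariance cancellation.

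Your route, by contrast, leaves the drift $\sum_{k\le tn}(m_k-1/e)$ uncompensated. The statement that $\min S$ ``concentrates at $1/e$ with fluctuations of order $n^{-1/2}$'' is not correct: at each fixed step $m_k$ is a macroscopically random variable on $[0,z_0]$ (its limiting density is $1/(1-x)$ there), so $m_k-1/e$ is order $1$, not $n^{-1/2}$. What you actually need is a functional CLT for $\sum_k(m_k-1/e)/\sqrt n$ jointly with the martingale part, together with the exact asymptotic covariance that effects the reduction from $2/e$ to $2/e^{2}$. That is the substantive content of the theorem, and you have deferred it; note that the paper's own crude bound on $\sum_k f(m_k)$ is only $O(n^{3/4+\epsilon})$, so sharpening it to $O(\sqrt n)$ with an identified Gaussian limit would be real additional work. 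The weighting trick dissolves all of this in one stroke, and it is the one idea your proposal is missing.
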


\begin{remark}\label{rem:1}
The convergence in Theorems~\ref{theo:size}, \ref{theo:12} and  \ref{theo:11} is a weak convergence in the relevant Skorohod spaces. It is equivalent to the following statements. In Theorem~\ref{theo:size}, we define the random function $f_n(t):=n^{-1/2}\big( s(1,\lfloor tn \rfloor ) -tn/e \big)$. Then, for all $M>0$ there exists a coupling of the sequence $f_n$ and a Brownian motion $B_t$ such that almost surely
\begin{equation*}
    \sup _{t\le M} \Big| f_n(t) -\frac{\sqrt{3-e}}{e}\cdot  B_t  \Big| \to 0  ,\quad n\to \infty .
\end{equation*}
In Theorem~\ref{theo:11}, we define $f_n(t,y):=n^{-1/2}s \big( z_0+ yn^{-1/2}, \lfloor tn \rfloor  \big)$. Then, for all $M>0$, there exists a coupling such that almost surely
\begin{equation*}
    \sup _{t,|y|\le M} \Big| f_n(t,y) -\Big( \sqrt{2}e^{-1} B_t+yt -\inf _{x\le t } \big( \sqrt{2}e^{-1} B_x+yx \big) \Big) \Big| \to 0  ,\quad n\to \infty .
\end{equation*}
The convergence in Theorem~\ref{theo:12} is similar.
\end{remark}

\begin{figure}[htp]
    \centering
    \includegraphics[width=7.15cm]{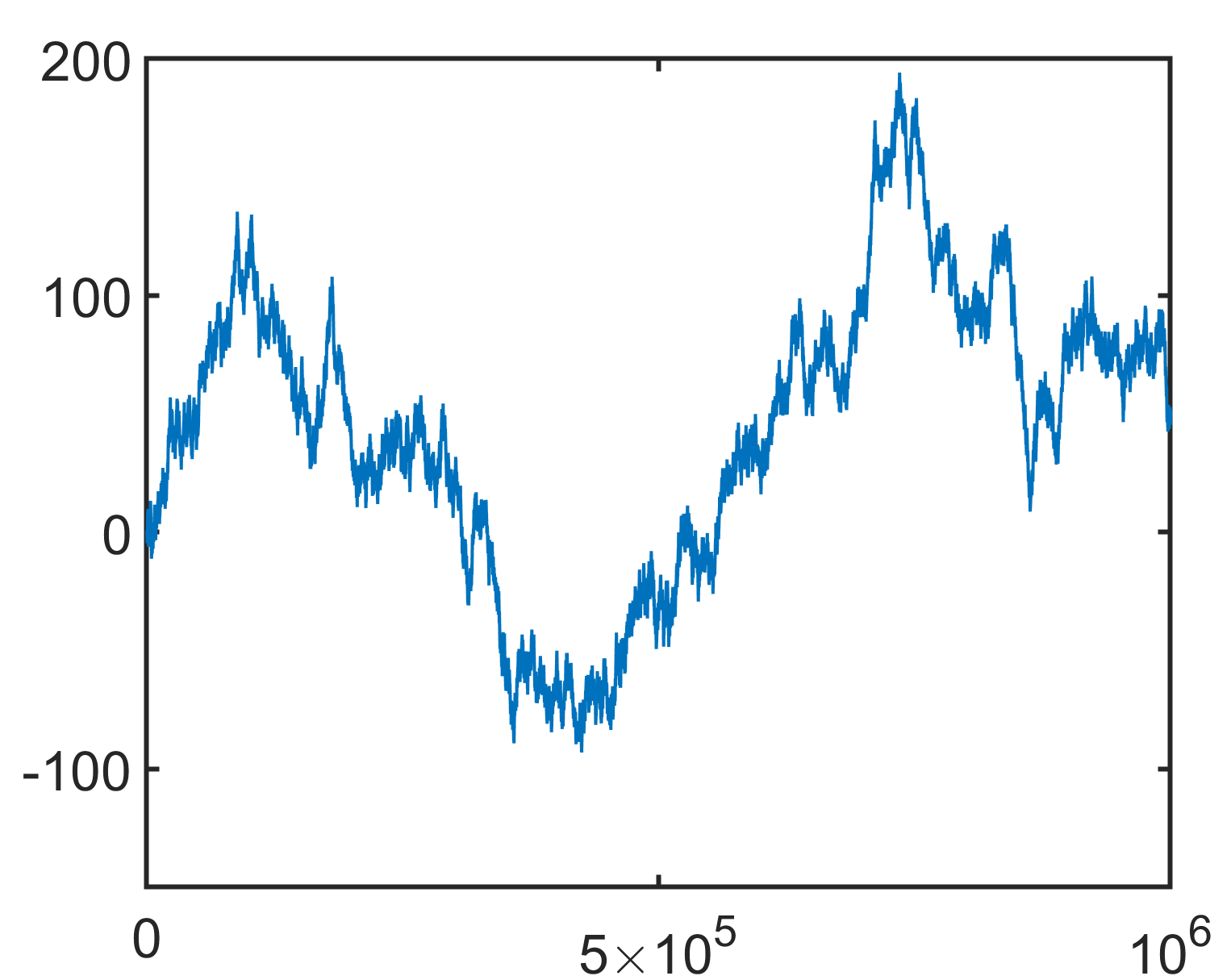}
    \includegraphics[width=7cm]{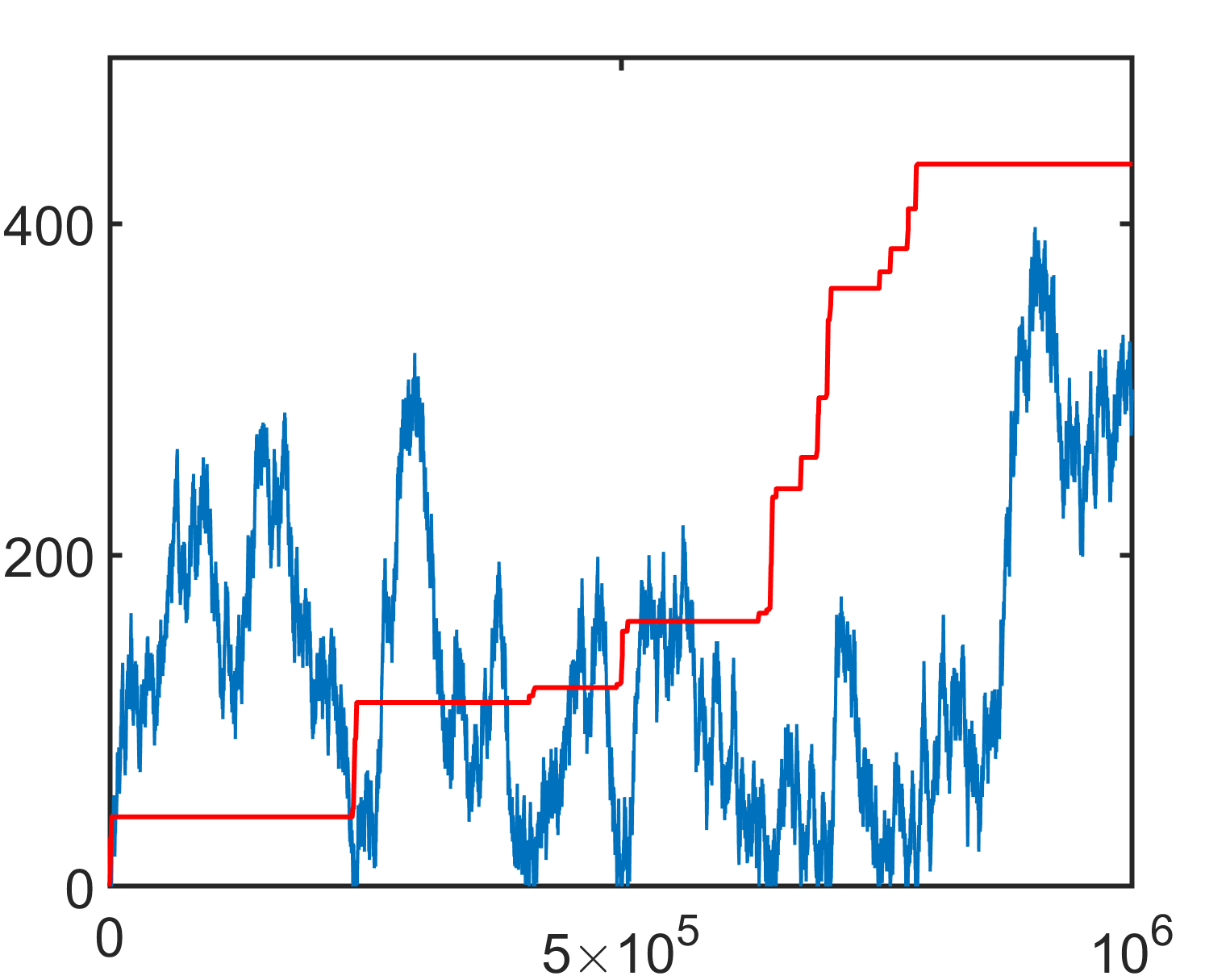}
    \hbox{\hspace{-0.6em}
    \includegraphics[width=14.35cm]{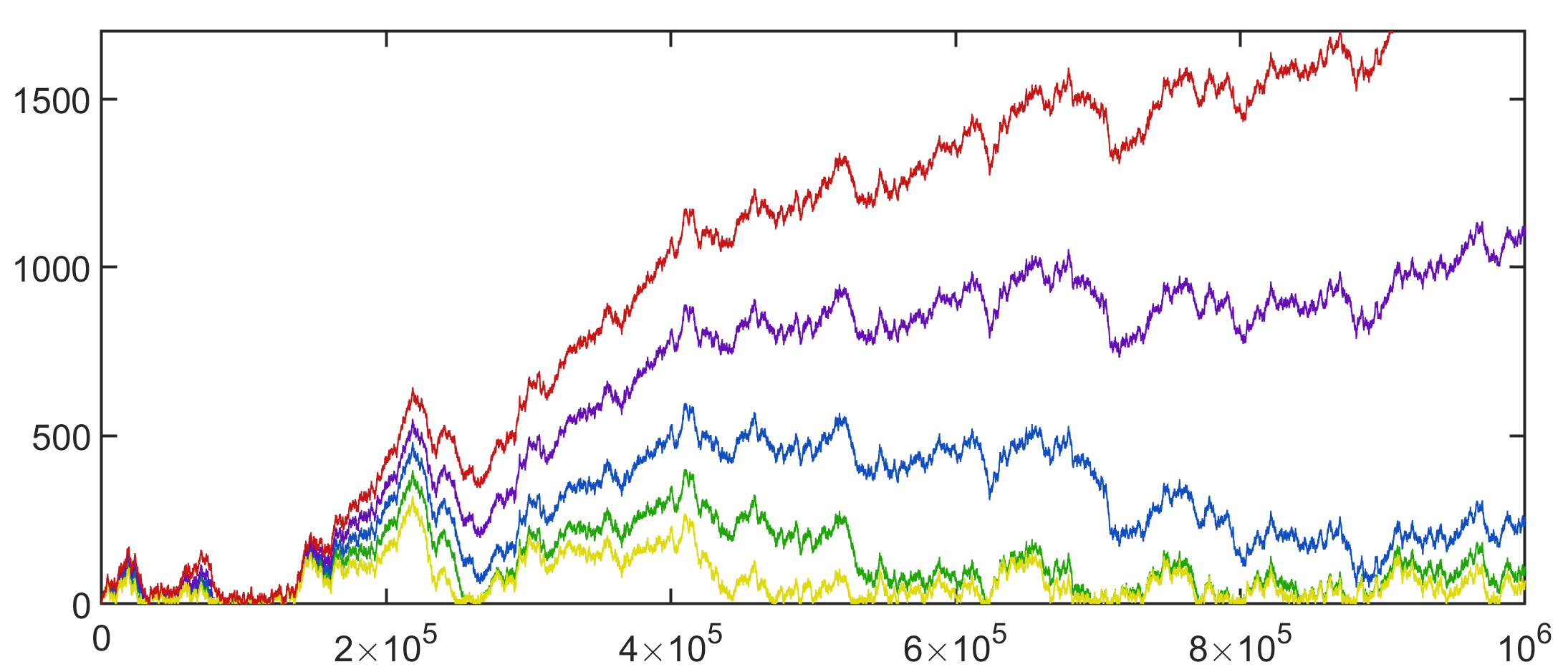}}
    \caption{A computer simulation of the process running for $n=10^6$ steps. Each picture is an independent sample of the process. The first picture shows the centered size of the list $s(1,k)-k/e$ as a function of  $k\le n$. Theorem~\ref{theo:size} states that this is a rescaled Brownian motion. The second picture shows the processes $R_k$ in red and $L_k$ in blue. These processes scale to the maximum process $M_t$ and to $M_t-B_t$ respectively by Theorem~\ref{theo:12}. Note that $R_k$ can grow only when $L_k$ is zero and similarly, in the limit, $M_t$ can grow only when $M_t-B_t$ is zero. The third picture shows $s(z_0+yn^{-1/2},k)$ as a function of $k$ for $y=-2$ in yellow, $y=-1$ in green, $y=0$ in blue, $y=1$ in purple and $y=2$ in red. Theorem~\ref{theo:11} gives the scaling limit of these processes.
    }
    \label{fig:galaxy}
\end{figure}

The rest of the paper is organized as follows. In the next section
we describe the proof of Theorem \ref{t11}. It is based on
subadditivity and martingale concentration applied to the natural
Doob martingale associated with the process. The concentration part
resembles the basic approach in \cite{ADK}. In Section~\ref{sec:martingale} we prove Theorem~\ref{theo:size}, Theorem~\ref{theo:12} and Theorem~\ref{theo:11}. The main idea in the proofs is to consider the process 
\begin{equation}\label{eq:branching explanation}
    W(z,n):=\sum _{x\in S(z,n) }  \frac{1}{1-x} 
\end{equation}
which we call ``the branching martingale." When $S(z,n)$ is not empty, this process has a zero drift when $z=1-1/e$, a negative drift when $z<1-1/e$ and a positive drift when $z>1-1/e$. When $S(z,n)$ is empty, $W$ can only increase in all cases. The intuition behind this magical formula comes from a certain multi-type branching process. In a multi-type branching process, there are individuals of $k$ different types. For each type $i$, an offspring distribution $\mu _{i}$ on $\mathbb N ^k$ is given. In each step, every individual of type $i$ gives birth to a random number of individuals of each type according to the distribution $\mu _{i}$. Recall that a single-type branching process is critical when the expected number of offspring is $1$ and in this case the size of the generation is a martingale. Similarly, for a multi-type branching process one defines the expectation matrix $M$ where $M_{i,j}:=\int x_j d\mu _i(x)$ is the expected number of children of type $j$ of an individual of type $i$. The process is then critical when the maximal eigenvalue of $M$ is one and in this case the process 
\begin{equation}\label{eq:eigen}
    N_t:=u\cdot Z(t)
\end{equation}
is a martingale where $u$ is the eigenvector of $M$ with eigenvalue $1$ and $Z(t)=(Z_1(t),\dots ,Z_k(t))$ is the number of individuals of each type at time $t$. For more background on multi-type branching processes see chapters V and VI in \cite{branching}.

We think of our process as a multi-type branching process in which the type space is continuous. Given $0<z<1$, we define a branching process with the type space $[0,z]$. In this branching, the offspring distribution of an individual of type $x<z$ is given as follows. With probability $(1-z)$, the individual has $0$ offspring, with probability $(z-x)$ it has one offspring of type that is uniformly distributed in $[x,z]$ and with probability $x$ it has two offspring, one of type $x$ and one of type that is uniformly distributed in $[0,x]$. 

We now explain the connection between this branching and our process. Suppose we start the process with a list $S$ such that the minimum of $S$ is $z$, and we want to study the time until $z$ is removed from the list. We think of each point $x<z$ that is added to the list as an individual of type $x$ in the branching process and ignore points that fall to the right of $z$. We claim that the number of individuals that were born before the extinction of the branching described above is exactly the time until $z$ is removed in the minimum process. Indeed, instead of exposing the branching in the usual way according to the generations, we expose the offspring of an individual when it becomes the current minimum of the list. When the element $x$ becomes the minimum, with probability $1-z$ it is removed and $S(z,n)$ decreases by $1$, with probability $z-x$ we remove $x$ from $S(z,n)$ and replace it with a uniform element in $[x,z]$ and with probability $x$ we keep $x$ and add another element to $S(z,n)$ that is uniformly distributed in $[0,x]$.

The branching described above can be shown to be subcritical when $z<z_0$, critical when $z=z_0$ and supercritical when $z>z_0$, where $z_0=1-1/e$. Now, in the critical case we look for a martingale of the form \eqref{eq:eigen} which translates in the continuous case to the process 
\begin{equation}
    N_n:=\sum _{x\in S(z_0,n)}f(x)
\end{equation}
where $f$ is an eigenfunction of the expectation operator with eigenvalue $1$. In order to find $f$ we let $m_n<z_0$ be the minimum of the list at time $n$ and write
\begin{equation}
    \mathbb E \big[ N_{n+1}-N_n \ | \ \mathcal F _n \big]= \int _0^{z_0} f(x)dx -(1-m_n) f(m_n).
\end{equation}
The last expression is zero only when $f(x)=1/(1-x)$ which leads to \eqref{eq:branching explanation}.

\section{Proof of Theorem \ref{t11}}
For any integer $0 \leq \ell \leq n$ and any $x \in [0,1]$
let $S(x,\ell)$ denote the set of numbers that are in $S$ 
and lie in $[0,x]$ after the first
$\ell$ steps, that is, after $x_1,x_2, \ldots , x_{\ell}$ have been
inserted. Put $s(x,\ell)=|S(x, \ell)|$. Therefore $s(1,n)$ is the size
of $S$ at the end of the process.
Let $m_i$ denote the value $\min(S(1,i-1))$ just before $x_i$ enters it.
Thus $m_1=0$ and $m_2=x_1$.

We start with the following lemma showing that
the expected value of the size of $S$ at the end is linear in $n$.
\begin{lemma}
\label{l22}
For all $n\ge 1$ we have 
\begin{equation*}
    \mathbb E [S(1,n)] \ge  n/4
\end{equation*}
\end{lemma}
\vspace{0.2cm}

\noindent
{\bf Proof:}\, For each $1 \leq i \leq n$ let $X_i$ be the indicator 
random variable whose value is $1$ if $x_i <m_i$ (and $0$ otherwise).
Let $Y_i $ be the indicator random variable with value $1$ if
$x_i >m_i$ and $m_i \leq 1/2$. Note that $\sum_{i=1}^n Y_i$ is
at most the number of $x_i$, $0 \leq i <n$ whose values are  in
$[0,1/2]$. Indeed, whenever $Y_i=1$ an element $x_j \leq 1/2$ with $j<i$ leaves the memory set. 
The expected value of this number is 
$1+(n-1)/2$, as $x_0=0<1/2$ and any other $x_i$ lies in
$[0,1/2]$ with probability $1/2$. Therefore the expectation of
$\sum_{i=1}^n Y_i$ is at most $(n+1)/2$. We claim that 
for every $1 \leq i \leq n$, the expected value of 
$2X_i+Y_i$ is at least $1$. Indeed, if $m_i > 1/2$ the expected
value of $2X_i$ is $2m_i >1$ (and the expectation of $Y_i$ is $0$). 
If $m_i \leq 1/2$, the expected value
of $2X_i$ is $2m_i$, and that of $Y_i$ is  $1-m_i$. By linearity of
expectation in this case the expected value of $2X_i+Y_i$ is
$2m_i+(1-m_i)=1+m_i \geq 1$. This proves the claim.
Using again linearity of expectation we conclude that
the expectation of $\sum_{i=1}^n (2X_i+Y_i)$ is at least $n$. Since
the expectation of $\sum_{i=1}^n Y_i$ is at most $(n+1)/2$ it follows
that the expectation of $\sum_{i=1}^n X_i$ is at least $(n-1)/4$. 
Note that the size of $S$ at the end is exactly $1+\sum_{i=1}^n X_i$,
as it has size $1$ at the beginning, its size never decreases,
and it increases by $1$ in step $i$ if and only if  $X_i=1$. 
The assertion of the lemma follows. \hfill $\Box$

We also need the following simple deterministic statement.
\begin{lemma}
\label{l23}
Let $T_1,T_2$ be arbitrary finite disjoint subsets of $[0,1]$.
Let $P=P(n)$ be the process above with the sequence
$x_1,x_2, \ldots ,x_n \in [0,1]$ starting with $S=T_1$. Let
$P'$ denote the process with the same sequence $x_i$, 
starting with $S=T_1 \cup T_2$.
For
$x \in [0,1]$ and $0 \leq \ell \leq n$, let $S(x,\ell)$
denote the set of numbers in $S$ that lie in
$[0,x]$ after the first
$\ell$ steps in the process $P$. Similarly, let $S'(x,\ell)$ denote the
set of numbers in $S$ that lie in $[0,x]$ after the first $\ell$ steps
in the process $P'$. Then $S'(x,\ell)$ contains $S(x,\ell)$ 
and moreover 
$$|S'(x,\ell)|-|S(x,\ell)| \leq |T_2|.$$ 
\end{lemma}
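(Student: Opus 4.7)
The plan is to prove the lemma by induction on $\ell$, tracking the symmetric difference between the two memories. Write $A_\ell$ and $A_\ell'$ for the full contents of the memory at time $\ell$ in the processes $P$ and $P'$ respectively, so that $S(x,\ell)=A_\ell\cap[0,x]$ and $S'(x,\ell)=A_\ell'\cap[0,x]$. It suffices to prove by induction on $\ell$ the two global invariants: (a) $A_\ell\subseteq A_\ell'$, and (b) $|A_\ell'\setminus A_\ell|\le|T_2|$. Indeed, given (a) and (b), we immediately obtain $S(x,\ell)\subseteq S'(x,\ell)$, and since $S'(x,\ell)\setminus S(x,\ell)\subseteq A_\ell'\setminus A_\ell$, we get $|S'(x,\ell)|-|S(x,\ell)|=|S'(x,\ell)\setminus S(x,\ell)|\le|T_2|$, which is what the lemma asks.

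For the base case $\ell=0$, we have $A_0=T_1$, $A_0'=T_1\cup T_2$, and since $T_1,T_2$ are disjoint, $A_0'\setminus A_0=T_2$, so both invariants hold. For the inductive step, assume (a) holds at time $\ell$ and let $m:=\min A_\ell$, $m':=\min A_\ell'$. By (a) we have $m'\le m$. When $x_{\ell+1}$ is inserted, I would split into three cases depending on whether $x_{\ell+1}<m'$, $m'<x_{\ell+1}<m$, or $x_{\ell+1}>m$ (the equality cases, which anyway don't matter for the intended i.i.d.\ uniform application, can be handled by fixing a tie-breaking convention or absorbed into one of the cases). In each case a short check shows that the element added to $A_\ell'\setminus A_\ell$ (if any) is $m$, while the element removed from $A_\ell'\setminus A_\ell$ (if any) is $m'$; more precisely, in the first case nothing changes, in the second case $m'$ is removed and nothing is added to the difference, and in the third case either $m=m'$ (and the difference is unchanged) or $m'<m$ (and $m$ replaces $m'$ in the difference). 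In all cases $A_{\ell+1}\subseteq A_{\ell+1}'$ and $|A_{\ell+1}'\setminus A_{\ell+1}|\le|A_\ell'\setminus A_\ell|$, completing the induction.

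There is essentially no obstacle here beyond bookkeeping; the only thing to be careful about is that the inductive hypothesis (a) is genuinely needed to guarantee $m'\le m$, and that the case $m'<x_{\ell+1}<m$ is what permits $|S'(x,\ell)|$ to exceed $|S(x,\ell)|$ by a nontrivial amount, but this difference can never accumulate beyond its initial value $|T_2|$ because the symmetric difference is monotonically non-increasing in $\ell$.
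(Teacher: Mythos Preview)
Your proposal is correct and follows essentially the same approach as the paper: induction on $\ell$ with the same three-case split according to the position of $x_{\ell+1}$ relative to the two current minima. The one cosmetic difference is that you prove the two global invariants for the full memories $A_\ell\subseteq A_\ell'$ and $|A_\ell'\setminus A_\ell|\le|T_2|$ once and then deduce the statement for every threshold $x$ simultaneously, whereas the paper first carries out the induction for $x=1$ and then remarks that the general $x$ case is ``essentially identical''; your packaging is arguably a bit cleaner, but the underlying argument is the same.
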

\vspace{0.2cm}

\noindent
{\bf Proof:}\, We first describe the proof for $x=1$.
Apply induction on $\ell$. The result
is trivial for $\ell=0$. Assuming it holds for $\ell$ consider step
number $\ell+1$ in the two processes, when $x=x_{\ell+1}$ enters
the memory $S$. If $x$ is smaller than the minimum of both sets
$S(1,\ell)$ and $S'(1,\ell)$ then it joins the memory in both processes
and the desired properties clearly hold. If it is larger than the minimum
in $S(1,\ell)$ then it is also larger than 
the minimum in $S'(1,\ell)$, it joins
the memory in both cases, and both minima leave. It is easy to check
that the desired properties hold in this case too. Finally, if
$x$ is smaller than the minimum in $S(1,\ell)$ but larger than the
minimum in $S'(1,\ell)$ then $x$ joins both memories and the minimum
of $S'(1,\ell)$ leaves. In this case $S'$ still contains $S$,
but the difference between their sizes decreases by $1$. This also
satisfies the desired properties, completing the proof of the induction
step for $x=1$. The statement for general $x$ follows from the statement for $x=1$
\hfill $\Box$

The above lemma has two useful consequences which are stated and proved
in what follows.
\begin{coro}
\label{c24}
For each fixed
$x$ the function  $f(\ell)=\mathbb E [s(x,\ell)]$  
is subadditive.
\end{coro} 
\vspace{0.2cm}

\noindent
{\bf Proof:}\, We have to show that for every $p,q$, 
$f(p+q)\leq f(p)+f(q)$.
Consider the process $P$ with the 
points $x_1,x_2, \ldots x_{p}, y_1,y_2 , \ldots ,y_q$.
The process starts with the first $p$ steps, let $T$
be the content of the memory $S$ after these $p$ steps.
We can now compare the process $P$ with the points
$y_1,y_2, \ldots y_q$ starting with the empty set $S$
(which is identical to starting with $S=\{0\}$), and the process
$P'$ with the same points $y_i$ starting with $S=T$.
By Lemma \ref{l23}, throughout the two processes, the number of 
elements in the process $P'$ that lie in $[0,x]$ is always at most
the number of elements in $P$ that lie in $[0,x]$, plus $|T|$.
Taking expectations in both sides of this inequality implies the
desired subadditivity. 
\hfill $\Box$

\begin{coro}
\label{c25}
For each $x \in [0,1]$ and any integer $\ell$, 
the random variable $s(x,\ell)$ is $2$-Lipschitz, that is, its value
changes by at most $2$ if we change the value of a single
$x_i$. Therefore 
for any $\lambda>0$ the probability that it deviates from its
expectation by more than $\lambda \sqrt{\ell}$ is at most
$2 e^{-\lambda^2/8}.$
\end{coro}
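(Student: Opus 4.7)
My plan is to prove the $2$-Lipschitz property by a coupling argument built on Lemma~\ref{l23}, and then invoke Azuma--Hoeffding for the concentration tail. Fix an index $i$ and two input sequences that agree on every coordinate except the $i$-th, where they take values $x_i$ and $x_i'$. The two processes evolve identically through step $i-1$, arriving at the same memory $M$ with minimum $m$. I would analyze step $i$ by cases on the sides of $m$ containing $x_i$ and $x_i'$. If they lie on the same side, the two post-step memories $M_i,M_i'$ share a large common subset and differ by a symmetric difference of size at most one. The only delicate case is the asymmetric one, say $x_i<m<x_i'$: then $M_i=A\cup\{m,x_i\}$ and $M_i'=A\cup\{x_i'\}$ where $A:=M\setminus\{m\}$, so both memories contain $A$ and exceed it by at most two elements.

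With this in hand, I would feed the three processes starting from $A$, $M_i$, and $M_i'$ the common continuation $x_{i+1},\ldots,x_\ell$. Applying Lemma~\ref{l23} to each of the pairs $(A,M_i)$ and $(A,M_i')$ gives the sandwich $s_A(x,\ell-i)\le s_{M_i}(x,\ell-i),\, s_{M_i'}(x,\ell-i)\le s_A(x,\ell-i)+2$, so in particular $|s_{M_i}(x,\ell-i)-s_{M_i'}(x,\ell-i)|\le 2$. This shows that changing any single input $x_i$ changes the final value of $s(x,\ell)$ by at most $2$.

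The concentration bound then follows by a standard Azuma--Hoeffding argument applied to the Doob martingale $Z_j:=\mathbb{E}[s(x,\ell)\mid x_1,\ldots,x_j]$. Since $s(x,\ell)$ is $2$-Lipschitz in the independent coordinates $x_1,\ldots,x_\ell$, the martingale differences $|Z_j-Z_{j-1}|$ are bounded by $2$, and Azuma with $c_j=2$ over $\ell$ steps yields
\begin{equation*}
\Pr\bigl[\,|s(x,\ell)-\mathbb{E}\,s(x,\ell)|>\lambda\sqrt{\ell}\,\bigr]\le 2\exp\!\left(-\frac{\lambda^2\ell}{2\cdot 4\ell}\right)=2e^{-\lambda^2/8}.
\end{equation*}

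The main obstacle is really only the asymmetric case in step $i$: one memory then carries two extra elements over the common sub-memory $A$, which is what forces the Lipschitz constant to be $2$ rather than $1$. Everything else is a direct invocation of Lemma~\ref{l23} (whose statement already allows arbitrary starting sets and thus applies to the pairs $(A,M_i)$ and $(A,M_i')$) together with the standard martingale tail bound.
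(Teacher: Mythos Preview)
Your proposal is correct and follows essentially the same approach as the paper: both analyze the effect of swapping $x_i$ on the memory right after step $i$, reduce to Lemma~\ref{l23} for the continuation $x_{i+1},\ldots,x_\ell$, and then invoke Azuma on the Doob martingale. The only cosmetic difference is that you use a sandwich against a common subset $A$ while the paper phrases it as ``add one element, then remove one or two''; your remark that the same-side case has ``symmetric difference of size at most one'' is a slip (it is size two, though each set exceeds the common core by one), but this does not affect the argument.
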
 
\vspace{0.2cm}

\noindent
{\bf Proof:}\, 
The effect that changing the value of $x_i$ to $x'_i$ has on the content
of $S$ after step $i$ is a change of $x_i$ to $x'_i$ in $S$, and possibly
a removal of the minimum element of $S$ before step $i$ in one scenario and
not in the other. In any case this means that one process can be converted
to the other by adding an element to the memory and then by removing one
or two elements from it.
By Lemma \ref{l23} for each
fixed choice of all other $x_j$ the first modification 
can only increase the value of $s(x,\ell)$, and can increase it by
at most $1$. The second modification can then only decrease this value,
decreasing it by at most $2$. This implies that the value changes 
by at most $2$,
that is, the function is $2$-Lipschitz. The concentration inequality
thus follows from Azuma's Martingale Inequality, c.f. \cite[Theorem~1.1]{bercu2008exponential}.  \hfill $\Box$
\vspace{0.2cm}

\noindent
{\bf Proof of Theorem \ref{t11}:}\, 
The symmetric difference of the sets $S(1,n)$ and $S(1,n+1)$ is of size at most two and therefore we may assume that $n$ is even throughout the proof of the theorem. We may also assume that $n$ is sufficiently large. Without any attempt  to optimize the absolute constants, let
$y \in [0,1]$ be a real number so that the expected value
$\mathbb E [s(y,n/2)]$ of the number of elements in the memory $S$ 
that lie in $[0,y]$ at the end 
of the process with the first $n/2$ elements $x_i$ is $5 \sqrt {n \log n}$. 
Since $\mathbb E [s(0,n/2)]=0$, $\mathbb E [s(1,n/2)]>n/8$ by Lemma \ref{l22}, 
and the function $f(y)=\mathbb E [s(y,n/2)]$ is continuous, there is such a
value (provided $n/8 \geq 5 \sqrt {n \log n}$.)
We claim that the expected 
number of indices $i$ so that $n/2 <i \leq n$
and the minimum in the memory satisfies
$m_i > y$, is smaller than $1/n^2$. Indeed, it follows from Lemma~\ref{l23} that the function
$\mathbb E [s(y,m)]$ is monotone increasing  in $m$. Thus, for every $\ell \in (n/2,n]$, 
$\mathbb E [s(y,\ell -1)] \geq 5 \sqrt {n \log n}.$
By Corollary \ref{c25} (and using that $\{m_\ell >y\}=\{s(y,\ell -1)=0\}$) this implies that
for every fixed $\ell$ in this range the probability that $m_{\ell}$
is larger than $y$ is at most
$2e^{-25 n \log n/(8 \ell)} <1/n^{3}$. 
Therefore, by linearity of expectation,
the expected number of steps $i \in (n/2,n]$
in which $m_i>y$ is smaller than $1/n^2$, proving the claim.

By this claim, the expected number of steps $i \in
(n/2,n]$ so that $m_i>y$ and $x_i >m_i$
is, of course, also at most $1/n^2$.

On the other hand, by the subadditivity established in Corollary
\ref{c24}, the expected value of $s(y,n)$ is at most 
$10 \sqrt{n \log n}$. 

We next show that $y$ deviates from $1-1/e$ by 
$O(\frac{\sqrt {\log n}}{\sqrt n}).$ To this end we compute the
expectation of the random variable $Y$ counting the number of
indices $i \in (n/2,n]$ for which $x_i \leq y$ and 
$x_i$ is being removed at a step in which the arriving element
exceeds $y$. By definition $Y=\sum_{n/2<i \leq n} X_i$ where
$X_i$ denotes the indicator random variable whose value is
$1$ if $x_i \leq y$ and $x_i$ is being removed 
at a step in which the arriving element is larger than $y$.

For all $n/2<i\le n$ and $i<j \le n$, let $\mathcal  A _{i,j}$ be the event that $x_i$ is removed by $x_j$. Conditioning on $x_i$ and on the event $\mathcal A _{i,j}$, the random variable $x_j$ is uniformly distributed on the interval $[x_i,1]$ and therefore on the event $\{x_i\le y\}$ we have
\begin{equation}
    \mathbb P \big( x_j\ge y \mid x_i , \mathcal A _{i,j} \big)= \frac{1-y}{1-x_i}.
\end{equation}
Thus, we obtain 
\begin{equation*}
\begin{split}
    \mathbb E [ X_i ] &=\mathbb E \bigg[ 
\mathds 1 \{x_i\le y \} \mathbb P \bigg( \bigcup _{j=i+1}^n \mathcal A _{i,j} \cap \{ x_j\ge y \}  \mid x_i \bigg) \bigg] \\
&=\mathbb E \bigg[ 
\mathds 1 \{x_i\le y \} \sum _{j=i+1}^n \mathbb P \big( \mathcal A _{i,j} \mid x_i \big) \mathbb P \big( x_j\ge y \mid x_i , \mathcal A _{i,j}  \big) \bigg] \\
&=\mathbb E \Big[ 
\mathds 1 \{x_i\le y \} \frac{1-y}{1-x_i}  \cdot \mathbb P \big( x_i \text{ is removed before time $n$} \mid x_i \big)  \Big]
\end{split}
\end{equation*}
The expected number of 
$i \in (n/2,n]$ for which $x_i\le y$ is not removed is 
at most $10 \sqrt {n \log n}$, as all these elements belong
to $S(y,n)$. Thus,
\begin{equation*}
    \begin{split}
        \mathbb E [Y]&= \sum_{n/2 <i \leq n} \mathbb E[X_i]= O(\sqrt{n\log n})
+\sum_{n/2 <i \leq n} \mathbb E \Big[ 
\mathds 1 \{x_i\le y \} \frac{1-y}{1-x_i}  \Big]\\
&=O(\sqrt{n\log n }) +\frac{n}{2}\int_{0}^{y} \frac{1-y}{1-x} dx=O(\sqrt{n\log n }) +\frac{n}{2} (1-y) \log \Big( \frac{1}{1-y}\Big),
    \end{split}
\end{equation*}
where in the third equality we used that $x_i$ is uniform in $[0,1]$. 

On the other hand, 
the expected number of steps $j \in (n/2,n]$
in which the element $x_j>y$ arrived and caused the removal of an
element $x_i=m_j<y$ for some $i \in (n/2,n]$ is
$(n/2)(1-y)-O(\sqrt{n \log n})$. Indeed, the expected number of $j \in
(n/2,n]$ with $x_j >y$ is $(n/2)(1-y)$, and almost 
each such $x_j$ removes an
element $m_j$, where the expected number of such $m_j$ that exceed
$y$ is $O(1/n^2)$. In some of these steps the removed element may be
$x_i$ for some $i \leq n/2$, but the expected number of such
indices $i$ is at most the expectation of $s(y,n/2)$ which is
only $O(\sqrt {n \log n})$.  

It follows that
$$
\frac{n}{2} (1-y) \log \Big( \frac{1}{1-y}\Big)
=\frac{n}{2}(1-y)+O(\sqrt {n \log n}).
$$
Dividing by $n(1-y)/2$ (noting that 
$1-y$ is bounded away from $0$) we conclude that 
$y=1-1/e+O(\frac{\sqrt {\log n}}{\sqrt n}).$

Since the expected number of elements
$x_i$ for $i \in (n/2,n]$ 
that fall in the interval 
$$\Big[ y-O\Big( \frac{\sqrt {\log n}}{\sqrt n} \Big), y+O\Big( \frac{\sqrt {\log
n}}{\sqrt n} \Big) \Big] $$
is $O(\sqrt{n \log n})$ we conclude that the expected 
number of steps $i \in (n/2,n]$ satisfying $x_i >1-1/e$
that leave the memory during the process is 
$O(\sqrt {n \log n})$. In addition, since
$\mathbb E [s(y,n)] \leq 10 \sqrt{n \log n}$ it follows
that the expected number of steps $i \in (n/2,n]$ so that $x_i
<1-1/e$  and $x_i$ stays in the memory at the end of the process
is also at most $O(\sqrt {n \log n})$.

By splitting the set of all $n$ steps into dyadic intervals we
conclude that the expected size of the symmetric difference 
between the final content of the memory and the set of all
elements $x_i$ larger than $z=1-1/e$ is $O(\sqrt {n \log n})$. 
This clearly also implies that the expected value of $s(1,n)$
deviates from $n/e$ by $O(\sqrt {n \log n})$. 
Finally note that by Corollary \ref{c25},
for any positive $\lambda$ 
the probability that either $s(1,n)$ or the above mentioned
symmetric difference deviate from their expectations
by more than $\lambda \sqrt n$ is at most
$O(e^{-\Omega(\lambda^2)})$.
\hfill $\Box$
\vspace{0.2cm}

\section{The branching martingale}\label{sec:martingale}

In this section we prove Theorem~\ref{theo:size}, Theorem~\ref{theo:12} and Theorem~\ref{theo:11}. The main idea in the proof is the following martingale which we call the branching martingale.

Let $0<z<1$ and let $z_0:=1-1/e$ be the critical point. Recall that $S(z,n)$ is the set of elements in the list at time $n$ that are smaller than $z$. Define the processes 
\begin{equation*}
    W(z,n):=\sum _{x\in S(z,n) }  \frac{1}{1-x} \quad \quad  Z(z,n):=\sum _{k=1}^n W(z,k) \cdot \mathds 1 \big\{ W(z,k-1)=0 \big\} 
\end{equation*}
 and let $X(z,n):=W(z,n)-Z(z,n)$. The following claim is the fundamental reason to consider these processes.
 \begin{prop}\label{prop:22}
 On the event $\{W(z,n)\neq 0\}$ we have that
 \begin{equation*}
     \mathbb E \big[ W(z,n+1)-W(z,n) \, | \, \mathcal F _n  \big]=-\log (1-z)-1
 \end{equation*}
 and therefore $X(z_0,n)$ is a martingale. Moreover, $Z$ is roughly minus the minimum process of $X$. More precisely, we have almost surely
 \begin{equation}\label{eq:12}
     0\le Z(z,n) +\min _{k\le n} X(z,k) \le \frac{1}{1-z}.
\end{equation}
\end{prop}
 
\begin{remark}
Proposition~\ref{prop:22} essentially explains all the results in this paper. See Theorem~\ref{theo:11} for example. Since $X(z_0,n)$ is a martingale it is a Brownian motion in the limit and $Z(z_0,n)$ is minus the minimum process of this Brownian motion. Thus, $W(z_0,n)$, which is closely related to the number of elements in the list that are smaller than $z_0$ is the difference between the Brownian motion and its minimum.  
\end{remark}
 
\begin{proof}[Proof of Proposition~\ref{prop:22}]
On the event $\{ W(z,n)\neq 0 \}$ we have   
\begin{equation}\label{eq:W}
    W(z,n+1)-W(z,n) = \mathds 1 \{ x_{n+1} \le z \} \frac{1}{1-x_{n+1}} -\mathds 1 \{  x_{n+1} \ge m_n \} \frac{1}{1-m_n},
\end{equation}
where $m_n$ is the minimum of the list at time $n$ and $x_{n+1}$ is the uniform variable that arrives at time $n+1$ in the process. Thus, on the event $\{ W(z,n)\neq 0 \}$ we have 
\begin{equation*}
    \mathbb E \big[ W(z,n+1)-W(z,n) \ \big| \ \mathcal F _n \big]= \int _0^{z} \frac{1}{1-x} dx - \int _{m_n}^1 \frac{1}{1-m_m} =-\log (1-z)-1.
\end{equation*}
It follows that $X(z_0,n)$ is a martingale. Indeed on the event $\{ W(z,n)\neq 0 \}$ we have
\begin{equation*}
    \mathbb E \big[ X(z,n+1)-X(z,n) \ \big| \ \mathcal F _n \big]=\mathbb E \big[ W(z,n+1)-W(z,n) \ \big| \ \mathcal F _n \big]=-\log (1-z_0)-1=0.
\end{equation*}
Whereas on the event $\{ W(z,n)= 0 \}$ we have that $X(n+1,z)=X(n,z)$.

We next turn to prove the second statement of the proposition. For all $k\le n $ we have that
\begin{equation*}
X(z,k)=W(z,k)-Z(z,k)\ge -Z(z,k) \ge -Z(z,n),   
\end{equation*}
and therefore $\min _{k\le n} X(z,k) \ge -Z(z,n)$.
Next, if $W(z,n)=0$ then 
\begin{equation*}
    \min _{k\le n} X(z,k) \le X(z,n) = -Z(z,n).
\end{equation*}
If $W(z,n) \neq 0$, we let $n'\le n$ be the last time before $n$ for which $W(z,n')=0$ (assuming it exists). We have that 
\begin{equation*}
 \min _{k\le n} X(z,k) \le  X(z,n') = -Z(z,n') \le -Z(z,n) +\frac{1}{1-x_{n'+1}} \le -Z(z,n) +\frac{1}{1-z}.
\end{equation*}
Finally suppose there is no such $n'\le  n$ for which $W(z, n') = 0$. Then $Z(z, k) = 0$ for all $k\le n$, and hence $X(z, k) = W(z, k)$ for all $k$, and $Z(z, n) + \min _{k\le n} X(z, k) = 1\le 1/(1-z)$. This finishes the proof of the proposition. 
\end{proof}

In the following section we give some rough bounds on the process that hold with very high probability. We will later use these a priori bounds in order to obtain more precise estimates and in order to prove the main theorems. 
 
\subsection{First control of the process}
 
\begin{definition}
 We say that an event $\mathcal A$ holds with very high probability (WVHP) if there are absolute constants $C,c>0$ such that $\mathbb P(\mathcal A )\ge 1-C\exp(-n^c)$. 
\end{definition} 

Let $\epsilon :=0.01$ and recall that $z_0:=1-1/e$. Define the events 
\begin{equation*}
    \mathcal A _1 := \big\{ \forall z < z_0, \ s(z,n)\le n^{\epsilon }/(z_0-z) \big\}
\end{equation*}
and
\begin{equation*}
    \mathcal A _2:= \big\{ \forall z \ge z_0, \ s(z,n)\le (z-z_0)n+ n^{1/2+\epsilon }   \big\}.
\end{equation*}

\begin{lemma}\label{lem:A_1 A_2}
The event $\mathcal A _1 \cap \mathcal A _2$ holds WVHP.
\end{lemma}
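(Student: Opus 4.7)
The proof splits by the two events. For $\mathcal{A}_2$ the argument is direct. Note that $S(1,n)\setminus\{x_k\ge z_0\}=S(z_0,n)$, so applying Corollary~\ref{c25} at scale $\lambda=n^{\epsilon/2}$ to the symmetric-difference random variable of Theorem~\ref{t11} upgrades its $\tilde O(\sqrt n)$ expectation bound to WVHP, giving $s(z_0,n)\le n^{(1+\epsilon)/2}$ WVHP. For $z\ge z_0$, every element of $S(z,n)\setminus S(z_0,n)$ equals some $x_i\in[z_0,z)$, so $s(z,n)-s(z_0,n)\le|\{i\le n:x_i\in[z_0,z)\}|$, which is $\mathrm{Bin}(n,z-z_0)$ and hence at most $(z-z_0)n+n^{(1+\epsilon)/2}$ WVHP by Chernoff. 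A union bound over the arithmetic grid $\{z_0,z_0+1/n,\dots,1\}$ together with monotonicity of $s(\cdot,n)$ in the first argument then yields $\mathcal{A}_2$ (the $1/n$ error from the grid is absorbed into $n^{1/2+\epsilon}$).

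For $\mathcal{A}_1$ the plan is to use the branching martingale. Since $1/(1-x)\ge 1$, we have $s(z,n)\le W(z,n)$, so it suffices to bound $\sup_{k\le n}W(z,k)$. For $z<z_0$, Claim~\ref{claim:2} gives drift $-\log(1-z)-1\le -(z_0-z)$ on $\{W(z,n)>0\}$ (a short convexity check shows $-\log(1-z)\le z+1/e$ on $[0,z_0]$). The increments satisfy $|\Delta W|\le 2/(1-z)\le 2e$ and $\mathbb{E}[(\Delta W)^2\mid\mathcal{F}_n]$ is bounded by an absolute constant $C_2$ uniformly in $z\le z_0$, so by the Taylor bound $e^x\le 1+x+x^2$ for $|x|\le 1$, there is a universal $c'>0$ such that for $\lambda:=c'(z_0-z)$ one has $\mathbb{E}[e^{\lambda\Delta W}\mid\mathcal{F}_n]\le 1$ on $\{W>0\}$, i.e., $e^{\lambda W(z,\cdot)}$ is a supermartingale on $\{W>0\}$. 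Decompose the trajectory of $W(z,\cdot)$ into its excursions away from zero. At the start $\sigma$ of each excursion, $W(z,\sigma)=1/(1-x_\sigma)\le e$; applying Doob's maximal inequality to the process $e^{\lambda W(z,k\wedge\tau)}$ (where $\tau$ is the end of the excursion) yields
\begin{equation*}
    \mathbb{P}\bigl(\sup\nolimits_{\sigma\le k\le \tau}W(z,k)\ge T\bigr)\le e^{\lambda(e-T)}.
\end{equation*}
Setting $T:=n^{\epsilon/2}/(z_0-z)$ makes the exponent $c'e(z_0-z)-c'n^{\epsilon/2}\le -c'n^{\epsilon/2}/2$ for $n$ large, so each excursion violates the bound with probability at most $e^{-\Theta(n^{\epsilon/2})}$. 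A union bound over the $\le n$ excursions and over the dyadic grid $z_k:=z_0-2^{-k}$ for $k=1,\dots,\lceil\log_2 n\rceil$ remains WVHP. Monotonicity of $s(\cdot,n)$ extends the bound from the grid to all $z\in[0,z_0-1/n]$ with a multiplicative loss of at most $2$, absorbed into $n^\epsilon$ for $n$ large. The remaining range $z_0-z<1/n$ is handled by $s(z,n)\le s(z_0,n)\le n^{(1+\epsilon)/2}\le n\le n^\epsilon/(z_0-z)$.

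The main obstacle is the exponential-supermartingale tuning. The scaling $\lambda\asymp z_0-z$ is forced: the second-order remainder $\lambda^2 C_2$ must be dominated by the negative drift contribution $-\lambda(z_0-z)$, and with the \emph{same} choice one has $\lambda T=c'n^{\epsilon/2}$, which is what drives the WVHP tail. Equally important, the reflection at $W=0$ must be handled via the excursion decomposition rather than as a single global supermartingale: on $\{W=0\}$ the drift is $-\log(1-z)\ge 0$, so no exponential supermartingale of this naive form can hold globally, and the argument genuinely relies on restarting the analysis from each excursion start where $W$ is bounded.
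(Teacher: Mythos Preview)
Your argument is correct. It is close in spirit to the paper's proof but differs in two places worth noting.

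For $\mathcal{A}_1$, both you and the paper reduce to controlling $W(z,\cdot)$ on excursions away from zero (the paper does this implicitly via the events $\mathcal{C}_k=\{0<W(z,k)\le e,\ W(z,m)\ne 0\text{ for }k\le m\le n,\ W(z,n)\ge T\}$). The difference is in the concentration tool: the paper applies Azuma's inequality to the drift-compensated martingale $M_m=W(z,m)+(m-k)(\log(1-z)+1)$, obtaining a bound of the form $\exp\!\big(-c\,\tfrac{T^2+(n-k)^2(z_0-z)^2}{\,n-k\,}\big)$, which is at most $e^{-cn^\epsilon}$ in both regimes of $n-k$. Your exponential-supermartingale plus Doob argument with $\lambda=c'(z_0-z)$ is horizon-free and yields the weaker exponent $e^{-c'n^{\epsilon/2}}$; you compensate by taking $T=n^{\epsilon/2}/(z_0-z)$ and absorbing the loss into the target $n^{\epsilon}/(z_0-z)$. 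Both are WVHP, so this is only a quantitative difference. Your observation that no global exponential supermartingale exists (because the drift at $W=0$ is nonnegative) and that the excursion decomposition is therefore essential is a nice conceptual point made explicit.

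For $\mathcal{A}_2$, the paper is self-contained within Section~\ref{sec:martingale}: it applies the just-proved $\mathcal{A}_1$ at $z_1=z_0-n^{-1/2}$ to get $s(z_1,n)\le n^{1/2+\epsilon}$, then adds the binomial count of points in $[z_1,z]$. You instead import the expectation bound $\mathbb{E}[s(z_0,n)]=O(\sqrt{n\log n})$ from the proof of Theorem~\ref{t11} and upgrade it to WVHP via Corollary~\ref{c25} (note: Corollary~\ref{c25} is stated for $s(x,\ell)$, so it should be applied directly to $s(z_0,n)$ rather than to the symmetric difference; this is a wording issue only). This is legitimate since Section~2 is logically prior, but the paper's route avoids the cross-reference.
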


\begin{proof}
Let $z< z_0$ and note that $W(z,n)\ge s(z,n)$. Also note that if $W(z,k-1)=0$ and $W(z, k)\neq 0$, then $W(z,k)=1/(1-x)$ for some
$x\le z<z_0$. Thus, we have that 
\begin{equation}\label{eq:14}
    \big\{ s(z,n) \ge n^{\epsilon }/(z_0-z) \big\} \subseteq \big\{ W(z,n)\ge n^{\epsilon }/(z_0-z)  \big\} \subseteq \bigcup _{ k\le n} \mathcal C _{k} 
\end{equation}
where the events $\mathcal C _k$ are defined by 
\begin{equation*}
\mathcal C_k = \big\{  0<W(z, k) \le e ,\ \forall k\le m \le n, W(z,m)\neq 0 \text{ and } W(z,n)\ge n^{\epsilon }/(z_0-z)   \big\}. 
\end{equation*}
Next, we show that each of these events has a negligible probability. To this end, let $k\le n$ and define
\begin{equation*}
    M_m:=W(z,m)+(m-k)(\log (1-z)+1),\quad  \tau :=\min \{m\ge k : W(z,m)=0\}.
\end{equation*}
By Proposition~\ref{prop:22}, the process $M_{m\wedge \tau }$ is a martingale. Moreover, since $\log (1-z)+1\ge z_0-z$, we have that 
\begin{equation*}
\begin{split}
    \mathcal C _k &\subseteq  \big\{  M_{n\wedge \tau }-M_{k\wedge \tau }\ge n^{\epsilon }/(z_0-z) -e+(n-k) (\log (1-z)+1) \big\} \\
    &\subseteq \big\{  M_{n\wedge \tau }-M_{k\wedge \tau }\ge  n^{\epsilon/2 }/(z_0-z)+(n-k) (z_0-z) \big\}.
\end{split}
\end{equation*}
Thus, using that $M_{m\wedge \tau }$ has bounded increments and that $(a+b)^2\ge a^2+b^2$, we get from Azuma's inequality that  
\begin{equation*}
    \mathbb P \big( \mathcal C _k \big) \le \exp \Big( -c\ \frac{n^{\epsilon }(z_0-z)^{-2}+(n-k)^2(z_0-z)^2}{n-k} \Big) \le C\exp ( -cn^{c} ), 
\end{equation*}
where the last inequality clearly holds both when $n-k\le n^{\epsilon /2} (z_0-z)^{-2}$ and when $n-k\ge n^{\epsilon /2} (z_0-z)^{-2}$. It follows from \eqref{eq:14} that
\begin{equation*}
    \mathbb P \big( s(z,n) \ge n^{\epsilon }/(z_0-z)  \big) \le C\exp (-n^c). 
\end{equation*}
In order to obtain this inequality simultaneously for all $z\le z_0$ we first use a union bound to get it simultaneously for all $z\in \{i/n: i\in \mathbb N , i/n\le z_0 \}$ and then argue that $s(z,n)$ does not change by much WVHP when $z$ changes by $O(n^{-1})$. The details are omitted. It follows that $\mathbb P (\mathcal A _1) \ge 1 -C\exp (-n^c)$. 

We turn to bound the probability of $\mathcal A _2$. To this end, let $z\ge z_0$ and $z_1:=z_0-n^{-1/2}$. The event 
\begin{equation*}
    \big\{ s(z_1,n) \le  n^{1/2+\epsilon }/2   \big\} \cap  \big\{ \big| \big\{ k\le n : x_k\in [z_1,z] \big\} \big| \le (z-z_0)n+ n^{1/2+\epsilon  }/2   \big\} 
\end{equation*}
holds WVHP by the first part of the proof and by Chernoff's inequality for the Binomial random variable $\big| \big\{ k\le n : x_k\in [z_1,z] \big\} \big| \sim \text{Bin}(n,z-z_1)$. On this event we have that $s(z,n) \le (z-z_0)n+n^{1/2+\epsilon }$ and therefore, using the discretization argument as above we obtain $\mathbb P (\mathcal A _2)\ge 1-C\exp (-n^c)$.
\end{proof}

The following lemma implies in particular that, with very high probability, the minimum is rarely above $z$ for any $z>z_0$. For the statement of the lemma, define the random variable 
\begin{equation*}
    M_n(z):=\big| \big\{ k\le n : m_k\ge z \big\} \big|
\end{equation*}
and the events
\begin{equation*}
    \mathcal B _1 := \big\{ \forall z > z_0, \ M_n(z)\le n^{\epsilon }/(z-z_0) \big\}
\end{equation*}
and 
\begin{equation*}
    \mathcal B _2 := \big\{ \forall z \le z_0, \ \big|M_n(z)- (\log (1-z)+1)n\big| \le n^{1/2+\epsilon } \big\}.
\end{equation*}

\begin{lemma}\label{lem:B_1 B_2}
The event $\mathcal B _1 \cap \mathcal B _2$ holds WVHP.
\end{lemma}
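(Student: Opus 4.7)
The plan is to extend the branching martingale of Claim~\ref{claim:2} to one that explicitly tracks $M_n(z)$. Splitting the conditional expectation of $W(z,n+1)-W(z,n)$ according to whether $W(z,n)=0$: the nonzero case contributes $-\log(1-z)-1$ by Claim~\ref{claim:2}, while the zero case (equivalently $m_{n+1}\ge z$) contributes $\int_0^z(1-x)^{-1}\,dx=-\log(1-z)$, an extra $+1$ that is exactly absorbed by the $-M_n(z)$ term since $M_{n+1}-M_n=\mathds 1\{W(z,n)=0\}$. Thus
\begin{equation*}
\tilde X(z,n) \;:=\; W(z,n)+n\bigl(\log(1-z)+1\bigr)-M_n(z)
\end{equation*}
is a martingale for every $z\in(0,1)$, with uniformly bounded increments whenever $z$ is bounded away from $1$, so Azuma's inequality gives $|\tilde X(z,n)|\le n^{1/2+\epsilon/2}$ WVHP for each fixed $z$.

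\noindent\textbf{For $\mathcal B_2$.} Rearranging, $M_n(z)-n(\log(1-z)+1)=W(z,n)-\tilde X(z,n)$, so the statement reduces to controlling $W(z,n)$ from above. Monotonicity $W(z,n)\le W(z_0,n)$ combined with Claim~\ref{claim:2} (Azuma applied to $X(z_0,\cdot)$ together with the deterministic bound \eqref{eq:12}) gives $W(z_0,n)=O(n^{1/2+\epsilon/2})$ WVHP. A standard discretization of $z\in[0,z_0]$ on a polynomial-size grid, analogous to the one implicit in the proof of Lemma~\ref{lem:A_1 A_2}, removes the dependence on $z$.

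\noindent\textbf{For $\mathcal B_1$.} Direct use of the martingale together with $\mathcal A_2$ gives only a residual of order $(z-z_0)^2n+n^{1/2+\epsilon}$ after cancellation, which is sharp enough for $z-z_0\lesssim n^{-1/2}$ but not beyond. For larger $z-z_0$ I would decompose the trajectory into alternating gaps ($W(z,\cdot)=0$) and excursions ($W(z,\cdot)>0$). Gap lengths are i.i.d.\ $\mathrm{Geom}(z)$, hence $O(\log n)$ WVHP, so $M_n(z)\le O(\log n)\cdot E_n(z)$, where $E_n(z)$ is the number of excursions. Each excursion starts at $W\ge 1$ and evolves with positive drift $\mu(z)=-\log(1-z)-1\asymp z-z_0$; a Cram\'er-type exponential supermartingale $e^{-\gamma W(z,k)}$, with $\gamma\asymp z-z_0$ chosen so that $E[e^{-\gamma Y}\mid m]\le 1$ uniformly in the current minimum $m\in[0,z)$, gives via optional stopping a return probability of at most $1-c(z-z_0)$. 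Independence of successive excursions then bounds $E_n(z)$ by a $\mathrm{Geom}(c(z-z_0))$ variable, hence $O(\log n/(z-z_0))$ WVHP. A further exponential-martingale tail bound guarantees that once an excursion reaches height $\gg\log n/(z-z_0)$ it no longer returns to $0$ within the horizon, and discretization in $z$ completes the proof.

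\noindent\textbf{Main obstacle.} The delicate ingredient is the uniform-in-$m$ choice of the Cram\'er exponent needed to turn $e^{-\gamma W(z,k)}$ into a supermartingale. Expanding $E[e^{-\gamma Y}\mid m]=1-\gamma\mu(z)+\tfrac12\gamma^2 A(m,z)+O(\gamma^3)$, one checks that the linear coefficient is state-independent (it is precisely Claim~\ref{claim:2}'s drift) while $A(m,z)$ remains bounded uniformly in $m\in[0,z)$ and $z$ bounded away from $1$; choosing $\gamma\asymp\mu(z)/\sup_m A(m,z)$ then yields the desired uniform supermartingale. Once this uniform bound is secured, the rest of $\mathcal B_1$ is a routine combination of Azuma, geometric-tail estimates, and discretization.
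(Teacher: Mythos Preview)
Your argument for $\mathcal B_2$ is essentially the paper's, repackaged. Your single martingale $\tilde X(z,n)=W(z,n)+n(\log(1-z)+1)-M_n(z)$ takes the place of the paper's pair: the martingale $N(z,n)=X(z,n)+(\log(1-z)+1)\bigl(n-M_n(z)\bigr)$ together with a separate Azuma estimate on $Z(z,n)+\log(1-z)M_{n-1}(z)$. The two packagings differ by a bounded-increment martingale and lead to the same conclusion once $0\le W(z,n)\le W(z_0,n)=O(n^{1/2+\epsilon})$ is fed in (the paper routes this through $\mathcal A_2$, you through Azuma on $X(z_0,\cdot)$ and \eqref{eq:12}; same content).

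For $\mathcal B_1$ you take a genuinely different route. The paper introduces no Cram\'er exponential supermartingale and no excursion decomposition. It bootstraps directly from the $z=z_0$ case of $\mathcal B_2$: with $n_0:=n^{\epsilon}(z-z_0)^{-2}$, Azuma on $N(z,\cdot)$ combined with the already-known bound $M_k(z)\le M_k(z_0)\le k^{1/2+\epsilon}$ gives WVHP $W(z,k)\ge e(z-z_0)k-4k^{1/2+\epsilon}>0$ for every $k\in[n_0,n]$; hence $m_k<z$ on that range, so $M_n(z)=M_{n_0}(z)\le M_{n_0}(z_0)\le n_0^{1/2+\epsilon}\le n^{\epsilon}/(z-z_0)$. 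Your excursion argument is correct and more self-contained --- the gaps really are i.i.d.\ geometric and the $S(z,\cdot)$-excursions really are i.i.d.\ (their law depends only on fresh uniforms), so the Cram\'er bound plus optional stopping does give a survival probability $\ge c(z-z_0)$ per excursion, and hence $E_n(z)$ is stochastically dominated by a geometric variable as you claim. Your route would even yield the sharper $M_n(z)=O\bigl((\log n)^2/(z-z_0)\bigr)$. The paper's route is shorter precisely because it sidesteps the uniform-in-$m$ exponent computation you flag as the main obstacle, at the price of relying on the critical-$z$ estimate already in hand.
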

\begin{proof}
    We claim that for any $z<1$ the process 
    \begin{equation}
        L(z,k):=W(z,k)+(\log (1-z)+1)(k-1)-M_{k-1}(z)
    \end{equation}
    is a martingale. Indeed, if $W(z,k)\neq 0$ then $M_{k}(z)=M_{k-1}(z)$ and therefore by Proposition~\ref{prop:22} we have $\mathbb E [L(z,k+1)-L(k,z)\mid \mathcal F _k]=0$. Otherwise, using that $M_k(z)=M_{k-1}(z)+1$ we have 
    \begin{equation*}
        \mathbb E [L(z,k+1)-L(k,z)\mid \mathcal F _k]=\int _0^z \frac{1}{1-x}dx +\log (1-z)=0.
    \end{equation*}
    Moreover, when $z<3/4$ the martingale $L(z,k)$ has bounded increments and therefore by Azuma's inequality we have WVHP 
    \begin{equation}\label{eq:64544}
       \big|  W(z,n+1)+(\log (1-z)+1)n-M_{n}(z) \big| \le n^{1/2+\epsilon }.
    \end{equation}
    Now, when $z\le z_0$, on the event $\mathcal A _2$ we have 
    \begin{equation*}
        W(z,n+1) \le W(z_0,n+1) \le   e \cdot s(z_0,n+1) \le n^{1/2+\epsilon }.
    \end{equation*}
    Substituting this back into \eqref{eq:64544} we obtain that WVHP
\begin{equation}\label{eq:M}
    \big| M_n(z)-(\log (1-z)+1)n \big|\le n^{1/2+2\epsilon }.
\end{equation}
This shows that $\mathcal B_2$ holds WVHP using the discretization trick. Indeed, WVHP each $z \in [0,z_0]$ is the minimum at most $n^{\epsilon }$ times and therefore WVHP $M_n(z)$ won't change by more than $n^{2\epsilon }$ when $z$ changes by $O(n^{-1})$.  

We turn to bound the probability of $\mathcal B _1$. To this end, let $n_0:= \lfloor n^{\epsilon }(z-z_0)^{-2} \rfloor $ and let $z_0 \le z \le 3/4$. Using \eqref{eq:64544} and the fact that $\log (1-z) +1\le -e(z-z_0)$ we obtain that WVHP for all $n_0\le k\le n $ 
\begin{equation}
    W(z,k+1) \ge e(z-z_0)k -|L(z,k)| \ge e(z-z_0)k -k^{1/2+\epsilon }>0,
\end{equation}
where the last inequality follows from the definition of $n_0$.
Thus, WVHP, for all $n_0< k \le n$ we have that $m_k \le z$ and therefore 
\begin{equation*}
    M_n(z)=M_{n_0}(z) \le M_{n_0}(z_0) \le n_0^{1/2+\epsilon } \le n^{\epsilon } /(z-z_0),
\end{equation*}
where the second to last inequality holds WVHP by \eqref{eq:M} with $z=z_0$. Of course the same bound holds when $3/4\le z\le 1$ (by slightly changing $\epsilon $) and therefore, using the discretization trick we get that $\mathcal B _1$ holds WVHP.
\end{proof}

\subsection{Proof of Theorem~\ref{theo:11}}

The main step toward proving the theorem is the following proposition that determines the scaling limit of $X(z,n)$ when $z$ is in a small neighborhood around $z_0=1-1/e$. 
 
 \begin{prop}\label{prop:1}
 We have that 
\begin{equation*}
     \Big\{ \frac{X(z_0+yn^{-1/2}\, ,\, tn)}{\sqrt{n}}  \Big\} _{ \substack{ t>0 \\ y\in \mathbb R  } } \overset{d}{\longrightarrow} \big\{  \sqrt{2} B_t+eyt \big\}_{ \substack{ t>0 \\ y\in \mathbb R  } } , \quad n \to \infty .
\end{equation*}  
\end{prop}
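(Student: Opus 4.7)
The plan is to apply a martingale invariance principle to a small perturbation of $X$. Following the decomposition in the proof of Lemma~\ref{lem:B_1 B_2}, I begin with
\[
N(z, n) \,:=\, X(z, n) + (\log(1-z) + 1)\bigl(n - M_n(z)\bigr),
\]
which is a martingale with increments bounded (up to $o(1)$) by $2e$ uniformly in $k$ for $z$ in a fixed neighbourhood of $z_0$. For $z = z_0 + y n^{-1/2}$ with $|y|$ bounded, a Taylor expansion gives $-\log(1-z) - 1 = ey/\sqrt{n} + O(1/n)$, and Lemmas~\ref{lem:A_1 A_2}--\ref{lem:B_1 B_2} yield $M_{tn}(z) = o(n)$ with very high probability. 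Plugging these in,
\[
\frac{X(z_0 + y n^{-1/2}, tn)}{\sqrt{n}} \,=\, \frac{N(z_0 + y n^{-1/2}, tn)}{\sqrt{n}} + eyt + o(1)
\]
uniformly on compacts in $(t, y)$, so the proposition reduces to the joint convergence $N(z_0 + y n^{-1/2}, \lfloor tn \rfloor)/\sqrt{n} \overset{d}{\longrightarrow} \sqrt{2}\, B_t$.

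The heart of the argument is the computation of the predictable quadratic variation of $N(z_0, \cdot)$. From \eqref{eq:W} I derive that on $\{W(z, k) \neq 0\}$ the conditional variance of one step of $N(z, \cdot)$ equals
\[
V(m_k, z) \,:=\, \int_0^{m_k}\!\frac{dx}{(1-x)^2} + \int_{m_k}^{z}\!\Bigl(\tfrac{1}{1-x} - \tfrac{1}{1-m_k}\Bigr)^{\!2} dx + \frac{1-z}{(1-m_k)^2},
\]
up to an $O(1/n)$ mean-squared correction when $z=z_0 + y n^{-1/2}$. Setting $z = z_0$ and $a := 1/(1-m)$, algebraic simplification gives $V(m, z_0) = e - 1 - a + 2a \log a$. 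By Lemma~\ref{lem:B_1 B_2}, the empirical distribution of $(m_k)_{k \leq n}$ on $[0, z_0]$ has density $1/(1-m)$ asymptotically (equivalently, $u := -\log(1-m)$ is uniform on $[0,1]$), so the substitution $a = e^u$ gives
\[
\int_0^1 \bigl(e - 1 - e^u + 2 u e^u\bigr)\, du \,=\, 2.
\]
Combined with Azuma-type concentration for $\sum_k \bigl(V(m_k, z_0) - \mathbb E[V(m_k, z_0) \mid \mathcal F_{k-1}]\bigr)$, this yields $\langle N(z_0,\cdot)\rangle_{tn}/n \to 2t$ in probability.

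With this quadratic variation in hand, the martingale functional CLT gives $N(z_0, \lfloor tn \rfloor)/\sqrt{n} \Rightarrow \sqrt{2}\, B_t$ in Skorohod space; the Lindeberg condition is trivial because the increments are bounded. For joint convergence in $y$, I run the analogous calculation for the predictable cross-variation between $N(z_1, \cdot)$ and $N(z_2, \cdot)$ for $z_1 < z_2$ both within $O(n^{-1/2})$ of $z_0$, obtaining
\[
\mathrm{Cov}\bigl(\Delta N(z_1, k),\, \Delta N(z_2, k) \,\big|\, \mathcal F_k\bigr) \,=\, V(m_k, z_1) \,-\, \int_{z_1}^{z_2}\!\frac{dx}{(1-x)(1-m_k)}
\]
on $\{m_k \leq z_1\}$, which differs from each self-variance by $O(n^{-1/2})$ per step. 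Summing over $k \leq tn$ and dividing by $n$, every entry of the joint predictable quadratic variation matrix converges to the common limit $2t$, so the family $\{N(z_0 + y n^{-1/2}, \cdot)/\sqrt{n}\}_y$ must share the single Brownian limit $\sqrt{2}\, B_t$, giving finite-dimensional convergence. Tightness jointly in $(t, y)$ on compacts then follows from the bound $\mathbb E\bigl[(N(z_2, tn) - N(z_1, tn))^2\bigr] = O\bigl((y_2 - y_1)\, n\bigr)$, via Doob's $L^2$ maximal inequality and a Kolmogorov--Chentsov argument.

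The main obstacle I anticipate is upgrading Lemma~\ref{lem:B_1 B_2} (which controls the CDF of $(m_k)$ only to additive error $n^{1/2 + \epsilon}$) to the pathwise statement $\sum_{k \leq tn} V(m_k, z_0) = 2tn + o(n)$ required above. I expect to handle this via Abel summation against the level-set CDF of Lemma~\ref{lem:B_1 B_2} on a dyadic mesh in $m \in [0, z_0]$, followed by a further Azuma bound on the centered martingale $\sum_k \bigl(V(m_k, z_0) - \mathbb E[V(m_k, z_0) \mid \mathcal F_{k-1}]\bigr)$ to transfer the averaging statement into the predictable quadratic variation required by the FCLT.
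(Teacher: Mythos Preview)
Your core argument coincides with the paper's: both compute the predictable quadratic variation of $X(z_0,\cdot)$ (equivalently $N(z_0,\cdot)$, since $\log(1-z_0)+1=0$), show it equals $2n+o(n)$ via the level-set control on the minima, and invoke the martingale FCLT. The paper packages your Abel-summation step as a separate lemma (Lemma~\ref{cor:1}): since $V_n$ is already the deterministic sum $\sum_k f(m_k)$, no auxiliary Azuma step is needed---the mesh argument against Lemma~\ref{lem:B_1 B_2} gives $V_n=2n+O(n^{3/4+3\epsilon})$ directly, so your final paragraph overcomplicates this point.

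Where you genuinely diverge is in handling the $y$-direction. The paper does \emph{not} use a multivariate FCLT or cross-variations; instead it proves the pathwise estimate (Proposition~\ref{prop:3})
\[
\bigl|X(z,n)-X(z_0,n)-en(z-z_0)\bigr|\le n^{1/4+3\epsilon}\quad\text{WVHP, uniformly in }|z-z_0|\le n^{-1/2+\epsilon},
\]
by applying Freedman's inequality to the difference martingale $L(z,k)=N(z,k)-N(z_0,k)$, whose predictable quadratic variation is only $O(n^{1/2+2\epsilon})$. This is sharper than your $L^2$ route: it makes tightness in $y$ automatic and delivers the uniform-in-$(t,y)$ coupling of Remark~\ref{rem:1}. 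Your cross-variation approach is valid in principle, but note that the stated bound $\mathbb E[(N(z_2,tn)-N(z_1,tn))^2]=O(|y_2-y_1|\,n)$ is \emph{not} sufficient for Kolmogorov--Chentsov (second moments need exponent $>1$ in $|y_2-y_1|$); what actually rescues you is that the predictable quadratic variation of $L$ is $O(n^{1/2+\epsilon})$ rather than $O(n)$, so after rescaling the difference tends to zero uniformly and no real tightness argument in $y$ is required. In effect you would end up reproving the paper's Proposition~\ref{prop:3} in a weaker, $L^2$ form.
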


We start by proving Theorem~\ref{theo:11} using Proposition~\ref{prop:1}. 

\begin{proof}[Proof of Theorem~\ref{theo:11}]
By Proposition~\ref{prop:1} and \eqref{eq:12} we have that the joint distribution
\begin{equation*}
     \Big\{ \Big( \frac{X(z_0+yn^{-1/2}\, ,\, tn)}{\sqrt{n}} \ , \ \frac{Z(z_0+yn^{-1/2}\, ,\, tn)}{\sqrt{n}}  \Big) \Big\}_{ \substack{ t>0 \\ y\in \mathbb R  } }
\end{equation*}
converges to $\big(  \sqrt{2} B_t+eyt \, , \,  -\inf _{s\le t }(\sqrt{2} B_t+eys) \big)$. Thus, using that $W(z,n)=X(z,n)+Z(z,n)$ we obtain
\begin{equation}\label{eq:13}
     \Big\{  \frac{W(z_0+yn^{-1/2}\, ,\, tn)}{\sqrt{n}} \Big\} _{ \substack{ t>0 \\ y\in \mathbb R  } } \overset{d}{\longrightarrow} \big\{  \sqrt{2} B_t+eyt - \inf _{s\le t } \big( \sqrt{2} B_t+eys \big)  \big\}_{ \substack{ t>0 \\ y\in \mathbb R  } }.
\end{equation}
For the proof of Theorem~\ref{theo:11} it suffices to show that $W(z_0+yn^{-1/2}\, ,\, k)$ in the equation* above can be replaced by $e\cdot s(z_0+yn^{-1/2},k)$. Intuitively, it follows as most of the terms in the sum in the definition of $W$ are close to $e$. Formally, let $z_1:= z_0-n^{-1/4}$ and  $z_2:=z_0+n^{-1/2+\epsilon }$. By Lemma~\ref{lem:A_1 A_2} we have WVHP that for all $z_1\le z \le z_2$ and $k\le n^{1+\epsilon }$ 
\begin{equation}\label{eq:Ws}
\begin{split}
     \big| W(z, k)-e\cdot s(z,k)  \big| &\le C s(z_1,k) +\sum _{x\in S(z,k)\setminus S(z_2,k)} \Big| \frac{1}{1-x}-e \Big|   \\
     &\le C n ^{1/4+\epsilon } +C n^{-1/4 } s(z_2,k) \le n^{1/4+2\epsilon }.
\end{split}
\end{equation}
This shows that $W$ can be replaced by $e\cdot s$ in equation \eqref{eq:13} and therefore finishes the proof of the theorem.
\end{proof}

We turn to prove Proposition~\ref{prop:1}. The proposition follows immediately from the following two propositions.

\begin{prop}\label{prop:2}
 We have that 
\begin{equation*}
     \Big\{ \frac{X(z_0,tn)}{\sqrt{n}}  \Big\} _{t>0} \overset{d}{\longrightarrow} \big\{  \sqrt{2} B_t \big\}_{t>0}, \quad n \to \infty .
\end{equation*} 
\end{prop}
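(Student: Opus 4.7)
The plan is to apply the martingale functional central limit theorem (FCLT) to $X(z_0, n)$, which is a zero-mean martingale by Claim~\ref{claim:2}. Two inputs are required: the small-jumps (Lindeberg) condition, and convergence of the predictable quadratic variation $\langle X(z_0) \rangle_{tn}/n$ to $2t$ in probability for every $t > 0$.

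For the jump bound, the increment $X(z_0, k) - X(z_0, k-1)$ vanishes on $\{W(z_0, k-1) = 0\}$ and otherwise coincides with $\mathds 1\{x_k \le z_0\}/(1-x_k) - \mathds 1\{x_k \ge m_{k-1}\}/(1-m_{k-1})$, which is bounded in absolute value by $e + 1/(1 - m_{k-1})$. By Lemma~\ref{lem:B_1 B_2}, WVHP $m_{k-1} \le z_0 + o(1)$ uniformly in $k \le n^{1+\epsilon}$, so every jump is $O(1)$ WVHP. After dividing by $\sqrt n$ the Lindeberg condition holds for free.

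For the quadratic variation, a direct integration over the uniform $x_k$ (in the style of the proof of Claim~\ref{claim:2}) gives, on $\{W(z_0, k-1) > 0\}$, the closed-form conditional variance
\[
\sigma_k^2 = v(m_{k-1}), \qquad v(m) := e - 1 - \frac{1 + 2 \log(1 - m)}{1 - m},
\]
and $\sigma_k^2 = 0$ otherwise. Because $\sigma_k^2$ is uniformly bounded WVHP, Azuma's inequality applied to the bounded-difference martingale $\langle X \rangle_n - \mathbb E \langle X \rangle_n$ reduces the task to showing $n^{-1} \mathbb E \langle X \rangle_{tn} \to 2t$. To identify the rate $2$, the plan is to construct a bounded corrector $g : [0, z_0] \to \mathbb R$ solving a Poisson-type equation $(\mathcal L g)(m) = v(m) - 2$ on $\{W > 0\}$, where $\mathcal L$ is the generator of the minimum chain. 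Telescoping then writes $\langle X \rangle_n - 2n$ as a boundary term $g(m_n) - g(m_0)$ plus a martingale of order $\sqrt n$, modulo the occupation-time correction $|\{k \le n : W(z_0, k) = 0\}| = o(n)$ WVHP (implied by the bound $s(z_0, n) = \tilde O(\sqrt n)$ from Theorem~\ref{t11} together with Lemma~\ref{lem:A_1 A_2}).

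The main obstacle is pinning down the constant $2$: the function $v$ ranges from $e - 2$ at $m = 0$ to $2e - 1$ at $m = z_0$, so the asymptotic variance rate depends in an essential way on the (nontrivial) stationary behaviour of the minimum $m_n$ on $[0, z_0]$. A cleaner alternative is a direct second-moment computation --- use the decomposition $X = W - Z$ together with the a priori estimates of Section~3.1 to evaluate $\mathbb E X(z_0, n)^2$ asymptotically --- avoiding the Poisson equation altogether. Once the asymptotic variance is identified, the martingale FCLT yields finite-dimensional convergence, and Skorokhod tightness follows from the uniform jump bound via Aldous's criterion.
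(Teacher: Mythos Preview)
Your overall framework---applying the martingale functional CLT to $X(z_0,\cdot)$ and reducing to the convergence $V_{tn}/n\to 2t$---is exactly the paper's route, and your formula $v(m)=e-1-\frac{1+2\log(1-m)}{1-m}$ for the conditional variance is correct. The difficulty, as you recognise, is identifying the limit rate $2$, and this is where your proposal has a genuine gap.

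The Poisson-equation idea does not work as stated because the minimum process $(m_k)$ is \emph{not} a Markov chain: when $x_k>m_{k-1}$ the current minimum is removed and the new minimum is the second-smallest element of the list, which depends on the entire set $S$ and not on $m_{k-1}$ alone. Hence $\mathbb E[g(m_k)\mid\mathcal F_{k-1}]$ is not a function of $m_{k-1}$, and there is no equation $(\mathcal L g)(m)=v(m)-2$ to solve. Your fallback ``direct second-moment'' route is circular: for a square-integrable martingale $\mathbb E X(z_0,n)^2=\mathbb E V_n$, so computing the former is the same problem as computing the latter. (The Azuma step is also shaky: $\langle X\rangle_n-\mathbb E\langle X\rangle_n$ is not a martingale, and no bounded-difference property for the Doob martingale is established.)

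The paper bypasses the non-Markovian issue by proving, as a separate lemma, an ergodic-type statement for the empirical distribution of the minima: for any bounded $f$ differentiable on $[0,z_0]$,
\[
\sum_{k\le n} f(m_k)\;=\;n\int_0^{z_0}\frac{f(x)}{1-x}\,dx\;+\;O\big(n^{3/4+\epsilon}\big)\quad\text{WVHP.}
\]
In other words the minima have asymptotic density $\tfrac{1}{1-x}\,\mathds 1_{[0,z_0]}$ (a probability density, since $\int_0^{z_0}\frac{dx}{1-x}=1$). This is deduced from the occupation-time estimate $M_n(z)=(\log(1-z)+1)n+O(n^{1/2+\epsilon})$ of Lemma~\ref{lem:B_1 B_2}: differencing gives $|\{k\le n:\,m_k\in[z,z+\delta]\}|\approx \frac{n\delta}{1-z}$, and a Riemann-sum argument finishes. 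Applying this with $f=v$ and computing $\int_0^{z_0}\frac{v(x)}{1-x}\,dx=2$ gives $V_n=2n+o(n)$ WVHP, which is exactly what the FCLT needs. So the missing ingredient in your plan is precisely this identification of the limiting empirical law of $m_k$; once you have it, no corrector or second-moment trick is required.
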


\begin{prop}\label{prop:3}
We have WVHP for all $z$ with $|z-z_0|\le n^{-1/2+\epsilon }$ that
 \begin{equation*}
      \big| X(z,n) -X(z_0,n) -en(z-z_0) \big|\le n^{1/4+3\epsilon }    .
 \end{equation*}
\end{prop}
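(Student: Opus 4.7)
The plan is to introduce an auxiliary martingale that isolates the discrepancy $X(z,n)-X(z_0,n)$, then combine a Freedman-type concentration bound with a deterministic expansion of its compensator. Set $c(z):=-\log(1-z)-1$, so that $c(z_0)=0$ and Taylor expansion around $z_0$ gives $c(z)=e(z-z_0)+O((z-z_0)^2)$. With $T_n(z):=\sum_{k=0}^{n-1}\mathds{1}\{W(z,k)\neq 0\}=n-M_n(z)$, Claim~\ref{claim:2} (together with the observation that on $\{W(z,n)=0\}$ the increment of $X(z,\cdot)$ vanishes) shows that $X(z,n)-T_n(z)\,c(z)$ is a martingale, hence so is
\begin{equation*}
Y_n(z):=X(z,n)-X(z_0,n)-T_n(z)\,c(z).
\end{equation*}
The goal is to bound $|Y_n(z)|\le n^{1/4+2\epsilon}$ WVHP and to expand $T_n(z)c(z)$ with error $O(n^{2\epsilon})$.

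For the drift, Lemma~\ref{lem:B_1 B_2} gives $M_n(z_0)\le n^{1/2+\epsilon}$ WVHP, and the $\mathcal{B}_1$ bound for $z>z_0$ together with the $\mathcal{B}_2$ expansion for $z\le z_0$ yields $M_n(z)=O(n^{1/2+\epsilon})$ uniformly in $|z-z_0|\le n^{-1/2+\epsilon}$. Combined with the Taylor expansion of $c(z)$,
\begin{equation*}
T_n(z)\,c(z)=\big(n-O(n^{1/2+\epsilon})\big)\big(e(z-z_0)+O((z-z_0)^2)\big)=en(z-z_0)+O(n^{2\epsilon}).
\end{equation*}

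For the martingale part I would estimate the predictable quadratic variation by cases on the list minimum $\mu_n$, assuming WLOG $z>z_0$. When $\mu_n\le z_0$ the increments of $X(z,\cdot)$ and $X(z_0,\cdot)$ agree except if $x_{n+1}\in(z_0,z]$, contributing $1/(1-x_{n+1})$; after subtracting the conditional mean $c(z)$ the conditional second moment is at most $\int_{z_0}^{z}(1-x)^{-2}\,dx\le e^2(z-z_0)$. When $\mu_n\in(z_0,z]$ we have $W(z_0,n)=0$ so $X(z_0,\cdot)$ is flat and the $Y$-increment equals $W(z,n+1)-W(z,n)-c(z)$, with a conditional second moment that is $O(1)$ (directly computed from \eqref{eq:W}). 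When $\mu_n>z$ both are flat and the increment is $0$. The number of steps in the middle case is at most $M_n(z_0)\le n^{1/2+\epsilon}$ WVHP, so
\begin{equation*}
\langle Y\rangle_n\le e^2 n|z-z_0|+O(n^{1/2+\epsilon})=O(n^{1/2+\epsilon}).
\end{equation*}
Since the increments are uniformly bounded by a constant, Freedman's inequality gives $|Y_n(z)|\le n^{1/4+2\epsilon}$ WVHP. Adding this to the drift estimate proves the bound at a fixed $z$.

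Uniformity in $z$ follows from a discretization: apply the single-$z$ bound along a $1/n$-spaced mesh in $[z_0-n^{-1/2+\epsilon},\,z_0+n^{-1/2+\epsilon}]$ (a set of size $O(n^{1/2+\epsilon})$), take a union bound, and interpolate using the observation that $W(z,n)$, $X(z,n)$ and $T_n(z)c(z)$ each change by WVHP at most $O(n^{\epsilon})$ between neighbouring mesh points, which follows from Lemma~\ref{lem:A_1 A_2} and a Chernoff bound on the number of $x_k$ in a $1/n$-interval. The main obstacle is the case analysis for $\langle Y\rangle_n$, specifically verifying that the rare bad intervals on which $\mu_n\in(z_0,z]$ contribute only $O(n^{1/2+\epsilon})$ to the quadratic variation; this is precisely where the a priori bound on $M_n(z_0)$ from Section~3.1 is essential.
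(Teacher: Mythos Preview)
Your proof is correct and essentially coincides with the paper's. Once you note that $c(z_0)=0$, your martingale $Y_n(z)=X(z,n)-T_n(z)c(z)-\big(X(z_0,n)-T_n(z_0)c(z_0)\big)$ is exactly the paper's difference martingale $L(z,n)=N(z,n)-N(z_0,n)$, since $N(z,k)=X(z,k)-c(z)\big(k-M_k(z)\big)$; the quadratic-variation bound $O(n^{1/2+\epsilon})$ via the a~priori control on $M_n(z_0)$, Freedman's inequality, the Taylor expansion of $c(z)$, and the final discretization are all carried out the same way.
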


We start by proving Proposition~\ref{prop:2}. The main tool we use is the following martingale functional central limit theorem \cite[Theorem~8.2.8]{durrett2019probability} . 

\begin{theo}\label{theo:durrett}
Let $M_n$ be a martingale with bounded increments. Recall that the predictable quadratic variation of  $M_n$ is given by  
\begin{equation*}
    V_n:=\sum _{k=1}^{n} \mathbb E \big[ \big( M_k-M_{k-1} \big)^2 \ | \ \mathcal F _{k-1} \big].
\end{equation*}
Suppose that for all $t>0$ we have that
\begin{equation*}
    \frac{V_{tn}}{n}  \overset{p}{\longrightarrow} \sigma ^2t, \quad n\to \infty .
\end{equation*}
where the convergence is in probability. Then
\begin{equation*}
     \Big\{ \frac{M_{tn}}{\sqrt{n}}  \Big\} _{0<t<1} \overset{d}{\longrightarrow} \big\{  \sigma B_t \big\}_{0<t<1} ,
\end{equation*}
where $B_t$ is a standard Brownian motion.
\end{theo}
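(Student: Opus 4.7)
The plan is to prove weak convergence in the Skorokhod space $D[0,1]$ by the classical two-step recipe: convergence of finite-dimensional distributions plus tightness. Both steps are essentially routine given the bounded-increments hypothesis, which trivializes the Lindeberg condition and yields clean $L^2$ moment bounds.

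For finite-dimensional convergence, fix times $0<t_1<\cdots<t_r\le 1$. By the Cram\'er--Wold device it suffices to show that for every $\lambda_1,\dots,\lambda_r\in\mathbb R$,
\begin{equation*}
\frac{1}{\sqrt n}\sum_{j=1}^r \lambda_j \bigl(M_{\lfloor t_j n\rfloor}-M_{\lfloor t_{j-1} n\rfloor}\bigr) \ \overset{d}{\longrightarrow}\ N\Bigl(0,\,\sigma^2\sum_{j=1}^r \lambda_j^2(t_j-t_{j-1})\Bigr).
\end{equation*}
The left-hand side is the terminal value of a scalar martingale whose $k$-th increment is $\lambda_{j(k)}(M_k-M_{k-1})/\sqrt n$, where $j(k)$ indexes the interval containing $k$. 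Its predictable quadratic variation is $\sum_{j=1}^r \lambda_j^2(V_{\lfloor t_jn\rfloor}-V_{\lfloor t_{j-1}n\rfloor})/n$, which converges in probability to $\sigma^2\sum_j\lambda_j^2(t_j-t_{j-1})$ by hypothesis; and since $|M_k-M_{k-1}|\le K$, each scaled increment is bounded by $C/\sqrt n$, so the Lindeberg condition is automatic. The scalar martingale array CLT then delivers the desired Gaussian limit.

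For tightness of $\{M_{\lfloor tn\rfloor}/\sqrt n\}_n$ in $D[0,1]$ I would invoke Aldous's criterion: given $[0,1]$-valued stopping times $\tau_n$ with respect to the natural filtration and deterministic $\delta_n\downarrow 0$, orthogonality of martingale increments yields
\begin{equation*}
\mathbb E\bigl[(M_{\lfloor(\tau_n+\delta_n)n\rfloor}-M_{\lfloor\tau_n n\rfloor})^2\bigr] = \mathbb E\bigl[V_{\lfloor(\tau_n+\delta_n)n\rfloor}-V_{\lfloor\tau_n n\rfloor}\bigr]\le K^2(\delta_n n+1),
\end{equation*}
where the last bound uses the deterministic inequality $V_k-V_{k-1}\le K^2$ coming from bounded increments. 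Dividing by $n$ gives $L^2$, hence in-probability, convergence to zero, which is precisely what Aldous requires.

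The only mild subtlety is to invoke a martingale array CLT that accommodates \emph{in-probability} convergence of the predictable quadratic variation rather than almost-sure convergence; Theorem 8.2.4 of Durrett (cited in the excerpt) is written in exactly that form. After that, combining fdd convergence with tightness yields weak convergence in $D[0,1]$, and the bounded-increments hypothesis does all of the analytic heavy lifting.
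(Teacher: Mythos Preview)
The paper does not prove this theorem at all: it is quoted as a black box from Durrett's textbook (Theorem~8.2.8 in \cite{durrett2019probability}) and used as a tool in the proofs of Propositions~\ref{prop:2} and Lemma~\ref{lem:5}. There is therefore nothing in the paper to compare your argument against.

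That said, your sketch is the standard and correct route to this functional CLT. Finite-dimensional convergence via Cram\'er--Wold plus the scalar martingale-array CLT is exactly how Durrett himself proceeds, and the bounded-increments hypothesis does indeed make the conditional Lindeberg condition automatic. For tightness, your use of Aldous's criterion together with the identity $\mathbb E[(M_\tau-M_\sigma)^2]=\mathbb E[V_\tau-V_\sigma]$ (which follows from the fact that $M_n^2-V_n$ is a martingale and optional stopping) and the deterministic bound $V_k-V_{k-1}\le K^2$ is clean and correct. One minor slip: the theorem you want from Durrett is~8.2.8, not~8.2.4.
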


In order to use Theorem~\ref{theo:durrett} we need to estimate the predictable quadratic variation of the martingale $X(z_0,n)$. To this end, we need the following lemma. Let us note that the exponent $3/4$ in the lemma is not tight. However, it suffices for the proof of the main theorems.

\begin{lemma}\label{cor:1}
For any bounded function $f:[0,1]\to \mathbb R $ that is differentiable on $[0,z_0]$ we have
\begin{equation*}
   \mathbb P \Big(  \Big| \sum _{k=1}^n f(m_k) - n\int _0^{z_0} \frac{f(x)}{1-x}dx \Big| \le  n^{3/4+2\epsilon } \Big) \ge 1-C\exp ( -n^c ),
\end{equation*}
where the constants $C,c$ may depend on the function $f$.
\end{lemma}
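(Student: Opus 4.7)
The idea is to rewrite the sum as an integral against the (empirical) level-set function $M_n(z)$ and then invoke the a priori estimates of Lemma~\ref{lem:B_1 B_2}. Concretely, since $f$ is differentiable on $[0,z_0]$, for every $k$ with $m_k \le z_0$ we have
\begin{equation*}
f(m_k)=f(0)+\int_0^{z_0}f'(z)\,\mathds 1\{z<m_k\}\,dz,
\end{equation*}
and the contribution from the indices with $m_k>z_0$ will be handled separately as an error term. Summing over $k=1,\ldots,n$ and interchanging the sum and the integral (Fubini) gives
\begin{equation*}
\sum_{k=1}^n f(m_k)=nf(0)+\int_0^{z_0}f'(z)\,M_n(z)\,dz\;+\;\sum_{k:\,m_k>z_0}\bigl(f(m_k)-f(z_0)\bigr).
\end{equation*}

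The first step is to bound the final remainder. On the event $\mathcal B_2$ of Lemma~\ref{lem:B_1 B_2}, applied at $z=z_0$ (where $\log(1-z_0)+1=0$), we have $M_n(z_0)\le n^{1/2+\epsilon}$, so the remainder is at most $2\|f\|_\infty\, n^{1/2+\epsilon}$ w.v.h.p.

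The second step is to replace $M_n(z)$ by its deterministic approximation. On $\mathcal B_2$ we have $|M_n(z)-(\log(1-z)+1)n|\le n^{1/2+\epsilon}$ simultaneously for every $z\le z_0$, and hence
\begin{equation*}
\Bigl|\int_0^{z_0}f'(z)M_n(z)\,dz - n\int_0^{z_0}f'(z)\,(\log(1-z)+1)\,dz\Bigr|\le \|f'\|_\infty\,z_0\,n^{1/2+\epsilon}.
\end{equation*}
A single integration by parts then identifies the deterministic integral, using the crucial cancellation $\log(1-z_0)+1=0$ at the upper endpoint:
\begin{equation*}
\int_0^{z_0}f'(z)(\log(1-z)+1)\,dz=\bigl[f(z)(\log(1-z)+1)\bigr]_0^{z_0}+\int_0^{z_0}\frac{f(z)}{1-z}\,dz=-f(0)+\int_0^{z_0}\frac{f(z)}{1-z}\,dz.
\end{equation*}
Putting the pieces together, the two $nf(0)$ terms cancel and we obtain $\sum_{k=1}^n f(m_k)=n\int_0^{z_0}\frac{f(x)}{1-x}\,dx+O_f(n^{1/2+\epsilon})$ w.v.h.p., which is in fact stronger than the $n^{3/4+4\epsilon}$ claimed. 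The probability bound $1-C\exp(-n^c)$ inherits directly from Lemma~\ref{lem:B_1 B_2}.

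The argument is essentially routine once the two estimates of $\mathcal B_2$ are available; the one mildly delicate point is the treatment of the indices with $m_k>z_0$, where $f$ is not assumed differentiable. Here boundedness of $f$ combined with the $\mathcal B_2$ bound on $M_n(z_0)$ suffices, so no new ingredient is required. No tail control beyond what is already established in Lemma~\ref{lem:B_1 B_2} is needed.
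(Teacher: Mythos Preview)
Your argument is correct and in fact yields the sharper error $O_f(n^{1/2+\epsilon})$ rather than $n^{3/4+4\epsilon}$. The only implicit assumption you make is that $f'$ is bounded on $[0,z_0]$ (needed both for the FTC representation and for the bound $\|f'\|_\infty\,z_0\,n^{1/2+\epsilon}$); the paper's proof uses exactly the same hypothesis when it writes $f(m)=f(z_i)+O(n^{-1/4})$, so this is not a discrepancy.

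The route, however, is genuinely different from the paper's. The paper discretizes $[0,z_0]$ into $O(n^{1/4})$ subintervals of length $n^{-1/4}$, uses Lemma~\ref{lem:B_1 B_2} to control the count $|I_n(z_i)|=M_n(z_i)-M_n(z_i+n^{-1/4})$ in each subinterval to within $n^{1/2+2\epsilon}$, and then compares a Riemann sum to the integral; the $n^{1/4}$ many error terms accumulate to $n^{3/4+O(\epsilon)}$. You instead bypass the discretization entirely: writing $f(m_k)=f(0)+\int_0^{z_0}f'(z)\mathds 1\{z<m_k\}\,dz$ and swapping sum and integral converts the problem into a single integral of $f'(z)M_n(z)$, to which the \emph{uniform} bound from $\mathcal B_2$ applies directly. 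The integration by parts, exploiting the cancellation $\log(1-z_0)+1=0$ at the upper endpoint, then recovers the target integral $\int_0^{z_0}f/(1-x)\,dx$ exactly. This is cleaner and avoids the $n^{1/4}$ loss; the paper's approach, on the other hand, would adapt more readily to functions $f$ that are merely H\"older or piecewise smooth, since it never differentiates $f$.
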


\begin{proof}
Throughout the proof we let the constants $C$ and $c$ depend on the function $f$. For the proof of the lemma we split the interval $[0,z_0]$ into $\lfloor n^{1/4} \rfloor $ small intervals $[y_{i-1},y_i]$ where $y_i:=iz_0/\lfloor n^{1/4} \rfloor $. For all $i\le \lfloor n^{1/4}\rfloor $ we let 
\begin{equation}
    J_i:= \big\{ k\le n : m_j\in [y_{i-1},y_i] \big\}.
\end{equation}
We have that 
\begin{equation}\label{eq:385}
\begin{split}
    \Big| \sum _{k=1}^n f(m_k) &-n\int _0^{z_0} \frac{f(x)}{1-x}dx   \Big|  \\
    &\le \sum _{i=1}^{\lfloor n^{1/4} \rfloor } \Big| \sum _{k\in J_i} f(m_k)- n\int _{y_{i-1}}^{y_i} \! \frac{f(x)}{1-x}dx  \Big| + \sum _{k=1}^n f(m_k) \cdot \mathds 1 \{ m_k \ge z_0\}.
\end{split}
\end{equation}
Next, we estimate the size of $J_i$. Using Lemma~\ref{lem:B_1 B_2} we have WVHP
\begin{equation*}
\begin{split}
    |J_i|=M_n(y_{i-1})-M_n(y_i)&=n\big( \log (1-y_{i-1})-\log (1-y_i) \big) +O(n^{1/2+\epsilon })\\
    &= \frac{n(y_i-y_{i-1})}{1-y_i}  +O(n^{1/2+\epsilon }),
\end{split}
\end{equation*}
where in the last equality we used the Taylor expansion of the function $\log (1-x)$ around $x=y_i$. Thus, using that $f(x)=f(y_i) +O(n^{-1/4})$ and $f(x)/(1-x)=f(y_i)/(1-y_i) +O(n^{-1/4})$ for all $x\in [y_{i-1},y_i]$ we obtain 
\begin{equation*}
\begin{split}
    \Big| \sum _{k\in J_i} f(m_k)- n\int _{y_{i-1}}^{y_i} \! \frac{f(x)}{1-x}dx  \Big| &\le |f(y_i)| \cdot \Big|  |J_i|- \frac{n(y_i-y_{i-1})}{1-y_i}  \Big| +O\big( |J_i|n^{-1/4} +n^{1/2} \big)\\
    &=O(n^{1/2+\epsilon }).
\end{split} 
\end{equation*}
Substituting the last estimate back into \eqref{eq:385} we obtain 
\begin{equation*}
    \Big| \sum _{k=1}^n f(m_k) -n\int _0^{z_0} \frac{f(x)}{1-x}dx   \Big|  \le O(n^{3/4+\epsilon }) +C M_n(z_0) =O(n^{3/4+\epsilon }),
\end{equation*}
where in the last equality we used Lemma~\ref{lem:B_1 B_2} once again. 
\end{proof}

We can now prove Proposition~\ref{prop:2}.
 
\begin{proof}[Proof of Proposition~\ref{prop:2}]
In order to use Theorem~\ref{theo:durrett} we compute the predictable quadratic variation of $X(z_0,k)$. Clearly, on the event $\{m_k\ge z_0\}$ we have that $\mathbb E \big[ ( X(z_0,k+1)-X(z_0,k) )^2  \ \big| \ \mathcal F_k \big]=0$. On the event $\{m_k\le z_0\}$ we have 
\begin{equation*}
\begin{split}
    \mathbb E \big[ ( X&(z_0,k+1)-X(z_0,k) )^2  \ \big| \ \mathcal F_k \big]=\mathbb E \big[ ( W(z_0,k+1)-W(z_0,k) )^2  \ \big| \ \mathcal F_k \big]  \\
    &= \int _0^{z_0} \frac{1}{(1-x)^2} dx-\frac{2}{1-m_k} \int _{m_t}^{z_0} \frac{1}{1-x}dx 
    +\frac{1}{1-m_k} \\
    &= \frac{1}{1-x} \Big| _0^{z_0}+\frac{2}{1-m_k} \log (1-x) \Big| _{m_k}^{z_0} 
    +\frac{1}{1-m_k}=e-1 -\frac{ 2\log (1-m_k) +1}{1-m_k}.
\end{split}
\end{equation*}
Thus, the predictable quadratic variation of $X(z_0,k)$ is given by 
\begin{equation}\label{eq:quadratic}
    V_n:=\sum _{k=0}^{n-1} \mathbb E \big[ ( X(z_0,k)-X(z_0,k) )^2  \ \big| \ \mathcal F_k \big]=\sum _{k=1}^{n-1} f(m_k),
\end{equation}
where 
\begin{equation*}
f(x):= \Big(e-1 -\frac{ 2\log (1-x) +1}{1-x} \Big) \cdot  \mathds 1 \{ x\le z_0\} .   
\end{equation*}
Next, we have that 
\begin{equation*}
\begin{split}
    \int _0^{z_0} \frac{f(x)}{1-x} &dx = \int _0^{z_0} \frac{1}{1-x}\Big(e-1 -\frac{ 2\log (1-x) +1}{1-x} \Big)dx \\
    &=(1-e)\log (1-x) \Big| _0^{z_0} -\frac{2\log (1-x)+3}{1-x} \Big| _0^{z_0}= e-1 -e+3=2
\end{split}
\end{equation*}
and therefore, by Lemma~\ref{cor:1} we have WVHP that $|V_n-2n|\le Cn^{3/4+3\epsilon }$. It follows that for all $t>0$ we have $V_{tn}/n \to 2t$ in probability and therefore by Theorem~\ref{theo:durrett} we have 
\begin{equation*}
     \Big\{ \frac{X(z_0,tn)}{\sqrt{n}}  \Big\} _{t>0} \overset{d}{\longrightarrow} \big\{  \sqrt{2} B_t \big\}_{t>0}
\end{equation*}
as needed.
\end{proof}

We turn to prove Proposition~\ref{prop:3}. The main tool we use is the following martingale concentration result due to Freedman \cite{freedman1975tail}. See also \cite[Theorem~1.2]{bercu2008exponential}.

\begin{theo}[Freedman's inequality]\label{theo:freedman}
    Let $M_n$ be a martingale with increments bounded by $M$ and let
    \begin{equation*}
        V_n:=\sum _{k=1}^n \mathbb E \big[ (M_{k+1}-M_k)^2 \ | \ \mathcal F _k \big]
    \end{equation*}
    be the predictable quadratic variation. Then, 
    \begin{equation*}
        \mathbb P \big( |M_n|\ge x, \ V_n\le y \big) \le 2\exp \Big( -\frac{x^2}{2y+2Mx} \Big).
    \end{equation*}
\end{theo}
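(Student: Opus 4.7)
The plan is the classical exponential supermartingale / Chernoff argument from Freedman's original paper. Let $\phi(u) := e^u - 1 - u$. Two elementary facts about $\phi$ drive everything: the function $g(u) := (e^u - 1 - u)/u^2$ (extended by continuity at $0$) is nondecreasing on $\mathbb{R}$; and for $0 \le u < 1$ one has $\phi(u) \le u^2/(2(1-u))$, since $\phi(u) = \sum_{k\ge 2} u^k/k! \le (1/2)\sum_{k \ge 2} u^k = u^2/(2(1-u))$.

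The first step is a single-step conditional bound: for any $\lambda > 0$ and any increment $D_k := M_k - M_{k-1}$ with $\mathbb{E}[D_k \mid \mathcal{F}_{k-1}] = 0$ and $|D_k| \le M$,
\begin{equation*}
\mathbb{E}\bigl[e^{\lambda D_k} \mid \mathcal{F}_{k-1}\bigr] \;\le\; \exp\!\left(\frac{\phi(\lambda M)}{M^2}\, \mathbb{E}\bigl[D_k^2 \mid \mathcal{F}_{k-1}\bigr]\right).
\end{equation*}
This follows by writing $e^{\lambda D_k} = 1 + \lambda D_k + (\lambda D_k)^2 g(\lambda D_k)$, using monotonicity to replace $g(\lambda D_k)$ by $g(\lambda M)$, taking conditional expectation (the linear term vanishes by the martingale property), and applying $1 + t \le e^t$. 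Iterating over $k$ shows that
\begin{equation*}
Z_n \;:=\; \exp\!\left(\lambda M_n - \frac{\phi(\lambda M)}{M^2}\, V_n\right)
\end{equation*}
is a nonnegative supermartingale with $\mathbb{E}[Z_0] = 1$, so $\mathbb{E}[Z_n] \le 1$.

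On $\{M_n \ge x,\, V_n \le y\}$ we have $Z_n \ge \exp(\lambda x - \phi(\lambda M)\,y/M^2)$, so Markov's inequality gives
\begin{equation*}
\mathbb{P}(M_n \ge x,\, V_n \le y) \;\le\; \exp\!\left(-\lambda x + \frac{\phi(\lambda M)}{M^2}\, y\right).
\end{equation*}
Now I would insert $\phi(\lambda M) \le (\lambda M)^2/(2(1-\lambda M))$ and optimize. The clean choice is $\lambda := x/(y+Mx)$, for which $\lambda M = Mx/(y+Mx) < 1$ and $1 - \lambda M = y/(y+Mx)$; substituting, both terms collapse and the exponent becomes exactly $-x^2/(2y+2Mx)$. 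Applying the identical argument to the martingale $-M_n$ handles $\{M_n \le -x,\,V_n \le y\}$, and a union bound recovers the two-sided statement for $|M_n|$ (the harmless factor of $2$ is either absorbed or the constant in the exponent adjusted, depending on the variant one states).

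There is no real obstacle: the only steps requiring care are verifying the monotonicity of $g$ and the polynomial upper bound on $\phi$, and checking that the optimal $\lambda$ lies in the admissible range $\lambda M < 1$. Both are routine, and the remaining algebra is a one-line computation.
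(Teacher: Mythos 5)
Your proof is correct, and it is the classical exponential-supermartingale argument; note that the paper does not prove this theorem at all, but imports it from Freedman (1975) (see also Bercu--Touati), where essentially your argument appears. The single-step bound $\mathbb{E}[e^{\lambda D_k}\mid\mathcal{F}_{k-1}]\le\exp\bigl(\phi(\lambda M)M^{-2}\,\mathbb{E}[D_k^2\mid\mathcal{F}_{k-1}]\bigr)$, the supermartingale $Z_n$, the Markov step on $\{M_n\ge x,\,V_n\le y\}$, and the choice $\lambda=x/(y+Mx)$ combined with $\phi(u)\le u^2/(2(1-u))$ do collapse to exactly $\exp\bigl(-x^2/(2y+2Mx)\bigr)$ for the one-sided event. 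The only caveat is the one you flag yourself: the two-sided statement as printed (with $|M_n|$ and no prefactor) follows from your one-sided bound only up to a factor $2$, which is immaterial for every application of the inequality in this paper, where only the order of magnitude of the exponent is used.
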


We can now prove Proposition~\ref{prop:3}.

\begin{proof}[Proof of Proposition~\ref{prop:3}]
Let $z$ such that $|z-z_0|\le n^{-1/2+\epsilon }$. It follows from Proposition~\ref{prop:22} that the process
\begin{equation*}
    N(z,k):=X(z,k)+(\log (1-z)+1)(k-1) - (\log (1-z)+1) \cdot M_{k-1}(z)
\end{equation*}
is a martingale. Next, consider the difference martingale $L(z,k):=N(z,k)-N(z_0,k)=N(z,k)-X(z_0,k)$ and note that
\begin{equation*}
    \big| L(z,k+1)-L(z,k) \big| \le C|z-z_0|+ C\mathds 1 \{ x_{k+1}\in [z_0,z] \cup [z,z_0] \} +C\mathds 1 \{m_k\ge z_0-n^{-1/2+\epsilon }\}.
\end{equation*}
Thus, by Lemma~\ref{lem:B_1 B_2} we have WVHP 
\begin{equation*}
    V_n:=\sum _{k=1}^n \mathbb E \big[ (L(z,k)-L(z,k-1))^2 \ |  \ \mathcal F _{k-1} \big] \le Cn^{1/2+\epsilon } + C M_n(z_0-n^{-1/2+\epsilon }) \le n^{1/2+2\epsilon }.
\end{equation*}
We obtain using Theorem~\ref{theo:freedman} that
\begin{equation*}
\begin{split}
    \mathbb P \big( \big| L(z,n) \big| \ge n^{1/4+2\epsilon } \big) &\le \mathbb P \big( \big| L(z,n) \big| \ge n^{1/4+2\epsilon } , \ V_n \le n^{1/2+2\epsilon } \big)+\mathbb P \big( V_k \ge n^{1/2+2\epsilon } \big) \\
    & \le C\exp (-n^c).
\end{split}
\end{equation*}
Finally, using the expansion $\log (1-z)+1=-e(z-z_0)+O(n^{-1+2\epsilon })$ we get that WVHP
\begin{equation*}
    \big| X(z,n)-X(z_0,n) -en(z-z_0) \big| \le \big| L(z,k) \big| + Cn^{2\epsilon }+C\cdot |z-z_0| \cdot M_n(z) \le n^{1/4+3\epsilon }.
\end{equation*}
This finishes the proof of the proposition using the discretization trick.
\end{proof}

\subsection{Proof of Theorem~\ref{theo:12}}

The main statement in Theorem~\ref{theo:12} is the scaling limit result
\begin{equation}\label{eq:64}
\Big\{ \Big(  \frac{L_{tn}}{\sqrt{n}} \, , \, \frac{R_{tn}}{\sqrt{n}} \Big)  \Big\}_{t>0} \overset{d}{\longrightarrow } \big\{ \sqrt{2}e^{-1} \big( M_t-B_t, M_t \big) \big\} _{t>0}.
\end{equation}
We start by proving \eqref{eq:64} and then briefly explain how the other statements in Theorem~\ref{theo:12} follow.

By Proposition~\ref{prop:2} and Proposition~\ref{prop:22} we have that 
\begin{equation}\label{eq:41}
     \big\{ n^{-1/2} \big( W(z_0,tn)\ , \ Z(z_0,tn) \big) \big\} _{t>0}  \overset{d}{\longrightarrow } \big\{ \sqrt{2} \big( B_t-\inf _{x<t}B_x\ , \ -\inf _{x<t}B_x   \big) \big\} _{t>0}.
\end{equation}
Moreover, by symmetry 
\begin{equation}\label{eq:42}
      \big\{ \sqrt{2} \big( B_t-\inf _{x<t}B_x\ , \ -\inf _{x<t}B_x   \big) \big\} _{t>0} \overset{d}{=} \big\{ \sqrt{2} \big( M_t-B_t\ , \ M_t   \big) \big\} _{t>0}.
\end{equation}

Next, recall that $L_n=s(z_0,n)$ and therefore, by \eqref{eq:Ws}, WVHP
\begin{equation}\label{eq:43}
    \big| e\cdot L_n- W(z_0,n) \big| \le n^{1/3}.
\end{equation}
The convergence in \eqref{eq:64} clearly follows from \eqref{eq:41}, \eqref{eq:42}, \eqref{eq:43} and the following lemma

\begin{lemma}\label{lem:34}
We have WVHP 
\begin{equation*}
    \big| e\cdot R_n -  Z(z_0,n) \big| \le n^{1/3}.
\end{equation*}
\end{lemma}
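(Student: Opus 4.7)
The plan is to compare $e R_n$ and $Z(z_0, n)$ increment by increment, exploiting the fact that both processes can grow only on the event $\{m_{k-1}\ge z_0\}=\{W(z_0,k-1)=0\}$. Set $\Delta_n:=eR_n-Z(z_0,n)$ and write its Doob decomposition $\Delta_n=\tilde M_n+A_n$, where $A_n$ is the predictable drift and $\tilde M_n$ a martingale with bounded increments. The lemma reduces to showing $|A_n|\le n^{1/3}/2$ and $|\tilde M_n|\le n^{1/3}/2$ WVHP; the first bound will come from the a priori estimates of Lemma~\ref{lem:B_1 B_2}, the second from Freedman's inequality (Theorem~\ref{theo:freedman}).

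\textbf{Drift.} On $\{m_{k-1}\ge z_0\}$, $R_k-R_{k-1}=\mathds 1\{x_k>m_{k-1}\}$ has conditional expectation $1-m_{k-1}$, while $Z(z_0,k)-Z(z_0,k-1)=\mathds 1\{x_k<z_0\}/(1-x_k)$ has conditional expectation $-\log(1-z_0)=1$; on $\{m_{k-1}<z_0\}$ both increments vanish. Using $e(1-z_0)=1$, the one-step drift of $\Delta$ is
\begin{equation*}
    \mathbb E\bigl[\Delta_k-\Delta_{k-1}\,\bigl|\,\mathcal F_{k-1}\bigr]=-e\,\mathds 1\{m_{k-1}\ge z_0\}(m_{k-1}-z_0),
\end{equation*}
so by the layer-cake identity $|A_n|=e\int_{z_0}^{1}M_n(z)\,dz$. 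I would split this integral at $z_0+n^{-1/2}$: Lemma~\ref{lem:B_1 B_2} gives WVHP that $M_n(z_0)\le n^{1/2+\epsilon}$ (since $\log(1-z_0)+1=0$) and $M_n(z)\le n^{\epsilon}/(z-z_0)$ for $z>z_0$, yielding $|A_n|\le e\,n^{-1/2}\!\cdot\! n^{1/2+\epsilon}+e\,n^{\epsilon}\log(en^{1/2})=O(n^{\epsilon}\log n)$, which is $\ll n^{1/3}$.

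\textbf{Martingale and main obstacle.} The increments of $\Delta$ are uniformly bounded by $e+1/(1-z_0)=2e$, and on $\{m_{k-1}\ge z_0\}$ their conditional second moment equals $e^2(1-m_{k-1})+\int_{0}^{z_0}(1-x)^{-2}dx\le 2e-1$; on $\{m_{k-1}<z_0\}$ it vanishes. Hence the predictable quadratic variation of $\tilde M$ satisfies $V_n\le C\cdot M_n(z_0)\le Cn^{1/2+\epsilon}$ WVHP. Freedman's inequality with $x=n^{1/3}$ and $y=Cn^{1/2+\epsilon}$ then gives $\mathbb P(|\tilde M_n|\ge n^{1/3})\le \exp(-cn^{1/6-2\epsilon})$, which is WVHP, and combining with the drift bound completes the proof. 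The main technical difficulty is the mild divergence of the drift integral near $z=z_0$: neither the bound $M_n(z)\le M_n(z_0)\lesssim n^{1/2+\epsilon}$ nor $M_n(z)\le n^{\epsilon}/(z-z_0)$ is sufficient on its own, but splitting at the critical scale $n^{-1/2}$ where the two bounds cross—precisely the Gaussian scale of the minimum process—yields the desired polylogarithmic control.
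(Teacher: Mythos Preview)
Your argument is correct and is essentially the paper's own proof, just packaged as a single Doob decomposition of $eR_n-Z(z_0,n)$ rather than routed through the intermediate quantities $R_n'=\sum_k\mathds 1\{m_k\ge z_0\}(1-m_k)$ and $M_{n-1}(z_0)$; the drift $A_n$ you isolate is exactly the paper's middle term $|eR_n'-M_{n-1}(z_0)|$ (you bound it slightly more sharply via the layer-cake split at scale $n^{-1/2}$, whereas the paper splits at $n^{-1/4}$), and your martingale $\tilde M_n$ amalgamates the two martingales the paper treats separately, all controlled by Freedman with predictable quadratic variation $O(M_n(z_0))\le n^{1/2+\epsilon}$. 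One harmless indexing slip: in the paper's convention $m_k$ denotes the minimum \emph{before} $x_k$ enters, so $\{W(z_0,k-1)=0\}=\{m_k\ge z_0\}$ rather than $\{m_{k-1}\ge z_0\}$.
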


\begin{proof}
We write 
\begin{equation}\label{eq:44}
    \big| e\cdot R_n -  Z(z_0,n) \big| \le e\big| R_n-R_n'  \big|+ \big| e \cdot R_n'- M_{n-1}(z_0) \big|+\big| M_{n-1}(z_0)-Z(z_0,n) \big|,
\end{equation}
where $R_n':=\sum _{k=1}^{n-1} \mathds 1 \{ m_k\ge z_0 \} (1-m_k)$ and bound each one of the terms in the right hand side of \eqref{eq:44}.

We start with the first term. Note that $R_n=\big| \{2\le  k\le n : m_k\ge m_{k-1} \ge z_0 \} \big|$ and therefore
\begin{equation*}
    \mathbb E \big[ R_{n+1}-R_n \ |  \ \mathcal F _n  \big] = \mathds 1 \big\{ m_{n}\ge z_0 \big\} (1-m_{n}).
\end{equation*}
It follows that $N_n:=R_n-R_n'$ is a martingale. The predictable quadratic variation of $N_n$ is given by 
\begin{equation*}
    \sum _{k=1}^{n-1} \mathbb E \big[ (N_{k+1}-N_k)^2  \ |  \ \mathcal F _k  \big] \le  \sum _{k=1}^{n-1} \mathds 1  \{ m_{k-1} \ge z_0 \} \le  M_n(z_0) \le n^{1/2+\epsilon },
\end{equation*}
where the last inequality holds WVHP by Lemma~\ref{lem:B_1 B_2}. Thus, using the same arguments as in the proof of Proposition~\ref{prop:3} and by Theorem~\ref{theo:freedman} we have that $|R_n-R_n'|\le n^{1/4+\epsilon }$ WVHP.

We turn to bound the second term in the right hand side of \eqref{eq:44}.  We have 
\begin{equation*}
\begin{split}
    \big| e \cdot R_n'- M_{n-1}(z_0) \big| &\le  \sum _{k=1}^n \mathds 1 \{ m_k\ge z_0 \} \big( 1-e(1-m_k) \big) \\
    &\le C M_n(z_0) n^{-1/4} +M_n\big( z_0+n^{-1/4} \big) \le n^{1/4+2\epsilon },
\end{split}
\end{equation*}
where in the second inequality we separated the sum to $m_k\in [z_0,z_0+n^{-1/4}]$ and $m_k\ge z_0+n^{-1/4}$ and where the last inequality holds WVHP by Lemma~\ref{lem:B_1 B_2}.

The last term in the right hand side of \eqref{eq:44} is bounded by $n^{1/4+\epsilon }$ WVHP using the same arguments as in the first term. Indeed, $Z(z_0,n)-M_{n-1}(z_0)$ is clearly a martingale and its quadratic variation is bounded by $n^{1/2+\epsilon }$ WVHP.
\end{proof}

We turn to prove the other results in Theorem~\ref{theo:12}. By \eqref{eq:64} we have that 
\begin{equation}\label{eq:1729}
    \Big\{ \frac{L_{tn}+R_{tn}}{\sqrt{n}}   \Big\}_{t>0} \overset{d}{\longrightarrow } \big\{  \sqrt{2}e^{-1}(2M_t-B_t) \big\} _{t>0}.
\end{equation}
Recall that a three dimensional Bessel process, $X_t$, is the distance of a three dimensional Brownian motion from the origin. In \cite{Pitman} Pitman proved that 
\begin{equation*}
     \big\{  2M_t-B_t \big\} _{t>0} \overset{d}{=}  \{  X_t \} _{t>0}
\end{equation*}
and therefore the convergence in \eqref{eq:936} follows from \eqref{eq:1729}.
 Next, using \eqref{eq:64} once again we get that  
\begin{equation}\label{eq:cor}
    \frac{L_n}{\sqrt{n}} \overset{d}{\longrightarrow } \sqrt{2}e^{-1}(M_1-B_1), \quad  \quad  \frac{R_n}{\sqrt{n}} \overset{d}{\longrightarrow } \sqrt{2}e^{-1}M_1 
\end{equation}
It is well known (see \cite[Theorem~2.34 and Theorem~2.21]{morters2010brownian}) that $M_1-B_1\overset{d}{=} M_1 \overset{d}{=}|N|$. Substituting these identities into \eqref{eq:cor} finishes the proof of the first two limit laws in \eqref{eq:LR}. 

It remains to prove the last limit law in \eqref{eq:LR}. By \eqref{eq:936} with $t=1$, it suffices to compute the density of $\sqrt{2}e^{-1}X_1$. This is a straightforward computation as $X_1$ is the norm of a three dimensional normal random variable. We omit the details of the computation which lead to the density given in \eqref{eq:symmetric}.

We note that one can compute the density of $2M_1-B_1$ without using the result of Pitman \cite{Pitman}. The following elementary argument was given to us by Iosif Pinelis in a Mathoverflow answer \cite{math}. By the reflection principle  for all $a>b>0$ 
\begin{equation}\label{eq:reflection}
    \mathbb P \big( M_1>a \ , \ B_1<b \big)=\mathbb P \big( B_1\ge 2a-b \big) =1-F(2a-b).
\end{equation}
Equation \eqref{eq:reflection} gives the joint distribution of $M_1$ and $B_1$. From here it is straightforward to compute the joint density of $(M_1,B_1)$ and the density of $2M_1-B_1$. Once again, the details of the computation are omitted.

\subsection{Proof of Theorem~\ref{theo:size}}

Define $E_n(z):=\big|\{ k\le n : x_k\ge z \} \big|$ and consider the martingale 
\begin{equation*}
    N_n:= X(z_0,n) + e E_n(z_0) -n.
\end{equation*}
Theorem~\ref{theo:size} clearly follows from the following two lemmas.

\begin{lemma}\label{lem:5}
We have that 
\begin{equation*}
     \Big\{ \frac{N_{tn}}{\sqrt{n}}  \Big\} _{t>0} \overset{d}{\longrightarrow} \big\{  \sqrt{e-3}\,  B_t \big\}_{t>0}
\end{equation*}
\end{lemma}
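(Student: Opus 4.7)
The plan is to apply the martingale functional central limit theorem (Theorem~\ref{theo:durrett}) directly to $N_n$, following the same blueprint as the proof of Proposition~\ref{prop:2}. First I would verify that $N_n$ is a martingale with bounded increments: since $X(z_0,n)$ is a martingale by Claim~\ref{claim:2} and $\mathbb{E}[e\cdot\mathds{1}\{x_{n+1}\ge z_0\}-1\mid\mathcal F_n]=e(1-z_0)-1=0$, linearity yields the martingale property; boundedness follows because on $\{W(z_0,n)\ne 0\}$ we have $m_n<z_0$, hence $1/(1-m_n)\le e$.

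Next I would compute the predictable quadratic variation $V_n=\sum_{k=0}^{n-1}\mathbb{E}[(N_{k+1}-N_k)^2\mid \mathcal F_k]$ by splitting on whether $W(z_0,k)=0$. On $\{W(z_0,k)=0\}$ (i.e.\ $m_k\ge z_0$) the $X$-increment vanishes, leaving $N_{k+1}-N_k=e\mathds{1}\{x_{k+1}\ge z_0\}-1$ with conditional second moment exactly $e-1$; by Lemma~\ref{lem:B_1 B_2} the number of such indices is at most $M_n(z_0)\le n^{1/2+\epsilon}$ WVHP, so these contribute only lower-order terms to $V_n$. On $\{m_k=m<z_0\}$, partitioning $x_{k+1}$ into the intervals $[0,m]$, $[m,z_0]$, $[z_0,1]$, squaring the three corresponding increments of $N$, and integrating yields, after algebraic cancellation,
\begin{equation*}
\mathbb{E}\bigl[(N_{k+1}-N_k)^2\mid\mathcal F_k\bigr]=f(m_k),\qquad f(x):=2(e-1)-\frac{2\log(1-x)+3}{1-x}.
\end{equation*}

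Now I would apply Lemma~\ref{cor:1} to $f$ (bounded and smooth on $[0,z_0]$, extended by $0$ past $z_0$, the discrepancy on $\{m_k\ge z_0\}$ being absorbed into the $O(M_n(z_0))$ error). The first piece of the integral gives $2(e-1)\int_0^{z_0}\!dx/(1-x)=2(e-1)$, and for the second piece the substitution $u=1-x$ followed by integration by parts produces the antiderivative $-(2\log u+5)/u$, so
\begin{equation*}
\int_0^{z_0}\!\frac{2\log(1-x)+3}{(1-x)^2}\,dx=\int_{1/e}^{1}\!\frac{2\log u+3}{u^2}\,du=-5-(-3e)=3e-5.
\end{equation*}
Hence $\int_0^{z_0} f(x)/(1-x)\,dx=2(e-1)-(3e-5)=3-e$, and Lemma~\ref{cor:1} then yields $V_{tn}/n\to(3-e)t$ in probability for each $t>0$. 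Theorem~\ref{theo:durrett} delivers the desired Brownian scaling limit with diffusion coefficient $\sqrt{3-e}$ (the stated $\sqrt{e-3}$ is evidently a typo for $\sqrt{3-e}$, since $e<3$).

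The main obstacle is nothing conceptual but rather careful bookkeeping in the second step: the three-piece integration producing $f(m_k)$ involves several competing $1/(1-m_k)$ and $\log(1-m_k)$ terms that must cancel cleanly. Everything else is an application of machinery already established earlier in the section (Claim~\ref{claim:2}, Lemma~\ref{lem:B_1 B_2}, Lemma~\ref{cor:1}, and Theorem~\ref{theo:durrett}).
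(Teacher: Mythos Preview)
Your proposal is correct and follows essentially the same approach as the paper: compute the conditional second moment of the $N$-increment on $\{m_k<z_0\}$ to obtain $f(m_k)=2(e-1)-\frac{2\log(1-m_k)+3}{1-m_k}$, evaluate $\int_0^{z_0}f(x)/(1-x)\,dx=3-e$, and feed this into Lemma~\ref{cor:1} and Theorem~\ref{theo:durrett}. Your treatment is in fact slightly more careful than the paper's in that you explicitly isolate the contribution from the indices with $m_k\ge z_0$ (conditional second moment $e-1$, at most $M_n(z_0)\le n^{1/2+\epsilon}$ such indices) and justify the bounded-increment condition; you are also right that the $\sqrt{e-3}$ in the statement is a typo for $\sqrt{3-e}$.
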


\begin{lemma}\label{lem:6}
We have WVHP
\begin{equation*}
    \big| s(1,n) -n/e-N_n/e \big| \le n^{1/3+\epsilon }.
\end{equation*}
\end{lemma}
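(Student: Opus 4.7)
The plan is to decompose $s(1,n)$ according to the threshold $z_0=1-1/e$ and then invoke the estimates already established for $L_n$, $R_n$, and the branching martingale components $W,X,Z$.

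First I would write the identity
\begin{equation*}
    s(1,n) = s(z_0,n) + \big|\{k\le n : x_k\ge z_0\}\big| - R_n = L_n - R_n + \big|\{k\le n : x_k\ge z_0\}\big|,
\end{equation*}
which is a purely combinatorial observation: the list at time $n$ consists of all elements currently below $z_0$ (i.e.\ $L_n$) together with those elements at least $z_0$ that have entered and not been removed (i.e.\ $|\{k\le n: x_k\ge z_0\}| - R_n$). Multiplying through by $e$ and rearranging against the definition of $N_n$ yields
\begin{equation*}
    e\cdot s(1,n) - n - N_n = e(L_n-R_n) - X(z_0,n),
\end{equation*}
so the lemma reduces to showing that $|e(L_n-R_n) - X(z_0,n)| \le e\cdot n^{1/3+\epsilon}$ WVHP.

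For this I would use the decomposition $W(z_0,n) = X(z_0,n) + Z(z_0,n)$ from Claim~\ref{claim:2}, together with the two approximations already proved in the previous subsection: equation~\eqref{eq:43}, namely $|e\cdot L_n - W(z_0,n)| \le n^{1/3}$ WVHP, and Lemma~\ref{lem:34}, namely $|e\cdot R_n - Z(z_0,n)| \le n^{1/3}$ WVHP. Subtracting these gives
\begin{equation*}
    \big| e(L_n - R_n) - \big(W(z_0,n) - Z(z_0,n)\big) \big| = \big| e(L_n - R_n) - X(z_0,n) \big| \le 2n^{1/3}
\end{equation*}
WVHP, which is comfortably within the $n^{1/3+\epsilon}$ slack demanded by the lemma.

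There is no real obstacle here: the lemma is essentially an algebraic repackaging of \eqref{eq:43} and Lemma~\ref{lem:34}, both already in hand. The only thing to be careful about is the combinatorial identity in the first step — in particular, the quantity $R_n$ must be interpreted as the number of indices $k\le n$ with $x_k\ge z_0$ such that $x_k$ has been removed by time $n$, matching the definition of $R_n$ from Theorem~\ref{theo:12}. Once this identity is verified, the bound follows by a single triangle inequality, and Theorem~\ref{theo:size} then follows immediately by combining Lemma~\ref{lem:5} with this approximation.
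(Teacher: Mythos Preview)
Your argument is correct and is genuinely different from the paper's proof. The paper introduces a shifted threshold $z_1:=z_0+n^{-1/3}$, a second martingale $N_n'$ built from $X(z_1,\cdot)$, and then decomposes $e\,s(1,n)-n-N_n$ into six error terms $A_1,\dots,A_6$, each bounded separately using Lemma~\ref{lem:B_1 B_2}, Freedman's inequality for the difference martingale $N_n'-N_n$, and a Taylor expansion of $\log(1-z_1)$. You instead stay at the critical threshold $z_0$ and reduce everything to the exact identity $e\,s(1,n)-n-N_n=(eL_n-W(z_0,n))-(eR_n-Z(z_0,n))$, after which the bound is immediate from \eqref{eq:43} and Lemma~\ref{lem:34}. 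Your route is considerably shorter because it recycles the two estimates already proved in Section~3.3 rather than re-deriving analogous bounds at a perturbed threshold; the paper's approach, by contrast, is logically independent of Section~3.3 and would stand on its own even if Theorem~\ref{theo:12} had not yet been established.
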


We start by proving Lemma~\ref{lem:5}.

\begin{proof}[Proof of Lemma~\ref{lem:5}]
On the event $\{m_k\le z_0\}$ we have that 
\begin{equation*}
    N_{k+1}-N_k= \mathds 1 \{ x_{k+1} \le z_0\} \frac{1}{1-x_{k+1}}-\mathds 1 \{x_{k+1}\ge m_k \} \frac{1}{1-m_k}+e\cdot \mathds 1 \{ x_{k+1}\ge z_0 \} -1.
\end{equation*}
Thus, expanding the $10$ terms in $(N_{k+1}-N_k)^2$ we get that on this event
\begin{equation*}
\begin{split}
    &\mathbb E \big[ (N_{k+1}-N_k)^2 \ | \ \mathcal F _k \big] \\
    & = \int _0^{z_0} \frac{dx}{(1-x)^2} -\frac{1}{1-m_k}\int _{m_k}^{z_0} \frac{2\, dx}{1-x} -\int _0^{z_0}\frac{2\, dx}{1-x} +\frac{1}{1-m_k}-\frac{2}{1-m_k}+2+e-2+1 \\
    & =\frac{1}{1-x} \Big|_0^{z_0}+\frac{2\log (1-x)}{1-m_k}\Big| _{m_k}^{z_0}+2\log (1-x) \Big|_{0}^{z_0} -\frac{1}{1-m_k} +1+e\\
    & = \!  e-1-\frac{2}{1-m_k}-\frac{2\log (1-m_k)}{1-m_k}-2 -\frac{1}{1-m_k} +1+e= \frac{-2\log (1-m_k)-3}{1-m_k}+2e-2.
\end{split}
\end{equation*}
Thus, we can write $\mathbb E \big[ (N_{k+1}-N_k)^2 \ | \ \mathcal F _k \big]=f(m_k)$ where $f$ is given by the right hand side of the last equation when $m_k\le z_0$. Moreover, we have that 
\begin{equation*}
    \int _0^{z_0} \frac{f(x)}{1-x}dx=  \frac{-2\log (1-x)-5}{1-x} \Big | _0^{z_0} -(2e-2) \log (1-x) \Big | _0^{z_0}= -3e+5+2e-2=3-e.
\end{equation*}
Thus, by Lemma~\ref{cor:1} we have with very high probability that 
\begin{equation*}
    V_n: = \sum _{k=1}^{n-1} \mathbb E \big[ (N_{k+1}-N_k)^2 \ | \ \mathcal F _k \big]=\sum _{k=1}^{n-1}f(m_k) =(3-e)n+ O(n^{5/6}).
\end{equation*}
This finishes the proof of the lemma using Theorem~\ref{theo:durrett}. 
\end{proof}

We turn to prove Lemma~\ref{lem:6}.

\begin{proof}[Proof of Lemma~\ref{lem:6}]
Recall that  $E_n(z):=\big| \big\{ k\le n : x_k\ge z \big\} \big|$ and let $z_1:=z_0+n^{-1/3}$. By Proposition~\ref{prop:22} the process
\begin{equation*}
    N_k':=X(z_1,k)+(\log (1-z_1)+1)(k-1) - (\log (1-z_1)+1) \cdot M_{k-1}(z_1) +eE_n(z_1)-e(1-z_1)n
\end{equation*}
is a martingale. It is straightforward to check that
\begin{equation*}
    e\cdot s(1,n)-n-N_n=A_1+A_2+A_3+A_4+A_5+A_6
\end{equation*}
where 
\begin{equation*}
\begin{split}
    &A_1:=e\big(s(1,n)-s(z_1,n)-E_n(z_1)\big), \quad A_2:=e\cdot s(z_1,n)-W(z_1,n), \\
    & A_3:= Z(z_1,n), \quad A_4:=N_n'-N_n, \quad A_5:= e(z_0-z_1)n-(\log (1-z_1)+1)(n-1)\\
    &\text{and}\quad A_6=(\log (1-z_1)+1)M_{n-1}(z_1).
\end{split}
\end{equation*}
Next, we bound each of the $A _i$ WVHP. Any uniform point $x\ge z_1$ that arrived before time $n$ such that $x\notin S(1,n)$ can be mapped to the time $k\le n$ in which it was removed. Thus, using also Lemma~\ref{lem:B_1 B_2} we obtain that WVHP 
\begin{equation*}
    0\le E_n(z_1)- \big( s(1,n)-s(z_1,n) \big)  \le M_n(z_1) \le n^{1/3+\epsilon }.
\end{equation*}
This shows that WVHP $|A_1|\le en^{1/3+\epsilon }$.

Using the same arguments as in \eqref{eq:Ws} we get that $|A_2|\le n^{1/3+2\epsilon }$.

By Lemma~\ref{lem:B_1 B_2} we have WVHP $|A_3|\le 3M_n(z_1) \le 3n^{1/3+\epsilon }$.

In order to bound $A_4$ we use the same arguments as in the proof of Proposition~\ref{prop:3}. Define the martingale $L_k:=N_k'-N_k$. We have that 
\begin{equation*}
\big| L_{k+1}-L_k \big|\le Cn^{-1/3} +  C\mathds 1\{x_{k+1}\in [z_0,z_1]\} +C\mathds 1 \{m_k \ge z_0\}.
\end{equation*}
Thus, using Theorem~\ref{theo:freedman} we obtain that WVHP $|N_n-N'_n |=|L_n|\le n^{1/3+3\epsilon }$.

It follows from a second order Taylor expansion of $\log (1-z_1)$ around $z_0$ that $|A_5|\le Cn^{1/3}$. 

Lastly, by Lemma~\ref{lem:B_1 B_2} we have WVHP $|A_6|\le Cn^{-1/3}M_n(z_1)\le Cn^{\epsilon }$.

\end{proof}

\noindent
{\bf Acknowledgment:}\, We thank Ehud Friedgut and Misha
Tsodyks for helpful comments, we thank Ron Peled, Sahar Diskin and Jonathan Zung for fruitful discussions and we thank Iosif Pinelis for proving in \cite{math} that the density function of $2M_1-B_1$ is given by $\frac{2x^2}{\sqrt{2 \pi }} e^{-x^2/2}$.



\begin{thebibliography}{99}
\bibitem{ADK}
N. Alon, C. Defant and N. Kravitz,
The runsort permuton, arXiv:2106.14762, 2021.
\bibitem{AS}
N. Alon and J. H. Spencer, The Probabilistic Method,
Fourth Edition, Wiley, 2016, xiv+375 pp.
\bibitem{branching}
K. B. Athreya and P. E. Ney, Branching processes, Courier Corporation, 2004.
\bibitem{bercu2008exponential}
B. Bercu and A. Touati, Exponential inequalities for self-normalized martingales with applications, The  Annals  of  Applied  Probability, 18(5), 1848–1869, 2008.
\bibitem{durrett2019probability}
 R. Durrett, Probability: theory and examples, volume  49, Cambridge  university press, 2019.
\bibitem{freedman1975tail}
 D. A. Freedman, On tail probabilities for martingales, The  Annals  of  Probability, 3(1), 100–118, 1975.
\bibitem{FK}
E. Friedgut and G. Kozma, Private communication.
\bibitem{GKT}
A. Georgiou, M. Katkov and M. Tsodyks,
Retroactive interference model of forgetting,
Journal of Mathematical Neuroscience (2021) 11:4.
\bibitem{morters2010brownian}
P. M\"orters and Y. Peres, Brownian motion, volume 30,  Cambridge University Press, 2010.
\bibitem{math}
I. Pinelis, What is the distribution of {$2M_1-B_1$} where {$M_t$} is the maximum process of the Brownian motion {$B_t$}. MathOverflow question 409729.
\bibitem{Pitman}
J. W. Pitman, One-dimensional Brownian motion and the three-dimensional Bessel process, Advances in Applied Probability, 7(3), 511–526, 1975.
\bibitem{Ts} M. Tsodyks, Private Communication.

\end{thebibliography}


\end{document}